\documentclass[11pt]{article}
\usepackage[utf8]{inputenc}
\usepackage[T1]{fontenc}
\usepackage[margin=1in]{geometry}
\usepackage{amsmath,amsthm,amssymb}
\usepackage{booktabs}
\usepackage{array}
\usepackage{tabularx}
\usepackage{xcolor}
\usepackage{listings}
\usepackage{enumitem}
\usepackage{algorithm}
\usepackage{algpseudocode}
\usepackage[colorlinks=true, linkcolor=blue, citecolor=blue, urlcolor=blue]{hyperref}

\newtheorem{theorem}{Theorem}[section]
\newtheorem{lemma}[theorem]{Lemma}
\newtheorem{corollary}[theorem]{Corollary}

\newtheorem{definition}[theorem]{Definition}
\theoremstyle{remark}
\newtheorem{remark}[theorem]{Remark}

\title{Complete Determination of the Gallai--van der Waerden Number \(\mathrm{GW}(r;3,3)\)\\
	\large Anchor Lemma, Computer Verification, Pure Theoretical Proof for \(r\ge 9\),\\
	and Comparison with MORW Theorem 4.3}
\author{Yuan Si\\ \texttt{yuan\_si@aliyun.com}}
\date{}

\begin{document}
	\maketitle
	
	\begin{abstract}
		This paper completely determines the Gallai--van der Waerden number \(\mathrm{GW}(r;3,3)\) for all integers \(r\ge 3\):
		\[
		\mathrm{GW}(3;3,3)=\mathrm{GW}(4;3,3)=9,\qquad
		\mathrm{GW}(r;3,3)=r\quad (r\ge 5).
		\]
		The case \(r=3\) cites the known result of Mao--Ozeki--Robertson--Wang \cite{MORW} (hereafter MORW);
		the cases \(r=4,5,6,7,8\) are completed by the anchor interval \([1,9]\), the restriction method, and finite exhaustive search;
		the case \(r\ge 9\) is completed by a purely theoretical argument and requires no new computer verification.
		The computer-assisted part involves only finitely many instances of the form \((N,R)\), all verified by deterministic exhaustive search; the program is given in the appendix.
		
		The conclusions of this paper cover and improve MORW Theorem 4.3 \cite{MORW} in scope.
		MORW used a local forcing argument to prove \(\mathrm{GW}(r;3,3)=r\) for \(r\ge 12\);
		we use the \([1,9]\) anchor lemma to lower the purely theoretical threshold to \(r\ge 9\),
		and further lower the threshold to \(r\ge 5\) by combining with computer verification for \(r=5,6,7,8\).
		In particular, for \(r=4,5,\dots,11\), this paper gives exact values that were not determined by MORW Theorem 4.3.
		Section~\ref{sec:compare} explains in detail the differences between the two proof methods and the covering relation.
	\end{abstract}
	
	\tableofcontents
	
	\section{Terminology, Conventions, and Reading Guide}
	
	This section is intended for readers encountering Ramsey-type problems for the first time and explains the terminology used repeatedly below.
	Readers already familiar with these terms may skip this section and proceed directly to Section~\ref{subsec:def}.
	
	\subsection{Intervals, Colorings, and the Meaning of ``Exact''}
	
	For a positive integer \(n\), let
	\[
	[1,n]=\{1,2,\dots,n\}.
	\]
	An \(r\)-coloring is a map
	\[
	\chi:[1,n]\to\{1,2,\dots,r\}.
	\]
	Intuitively, one colors each integer in \([1,n]\) with one of \(r\) colors.
	It is not required that every color be used: for example, when \(r=3\), one may color all integers with color \(1\).
	
	If \(\chi\) uses all \(r\) colors, then \(\chi\) is called \emph{exact}.
	For example, an exact \(5\)-coloring of \([1,5]\) must assign pairwise distinct colors to the five positions;
	an exact \(5\)-coloring of \([1,6]\), however, must have exactly one color appearing twice and the remaining four colors appearing once each.
	
	Why do we repeatedly emphasize ``exact''? The reason is that a ``rainbow \(3\)-AP'' requires three positions with pairwise distinct colors.
	If a coloring does not use enough colors, a rainbow \(3\)-AP cannot exist, and the problem degenerates into finding only monochromatic \(3\)-APs,
	which is no longer the Gallai--van der Waerden number. Exact colorings guarantee that there are ``enough'' colors, so that both rainbow and monochromatic targets are meaningful.
	
	The phrase ``a coloring using at most \(s\) colors'' appears frequently below. It means that the actual number of colors used is at most \(s\),
	not necessarily exactly \(s\). This distinction is important: computer verification usually tests ``exact \(R\)-colorings,''
	whereas after restricting to a subinterval the actual number of colors used may decrease, so the phrase ``at most'' is needed.
	Remark~\ref{rem:exact-vs-atmost} explains the relation between the two conventions in detail.
	
	\subsection{Three-Term Arithmetic Progressions and Bad \(3\)-APs}
	
	A set of the form
	\[
	\{a,a+d,a+2d\}\qquad (a\ge 1,\ d\ge 1)
	\]
	is called a \(3\)-term arithmetic progression, or \(3\)-AP for short.
	For example, \(\{1,3,5\}\) (common difference \(d=2\)) and \(\{2,5,8\}\) (common difference \(d=3\)) are \(3\)-APs.
	Inside \([1,n]\), a \(3\)-AP is completely determined by its starting point \(a\) and common difference \(d\), with \(a+2d\le n\).
	
	A \(3\)-AP is called \emph{monochromatic} if its three elements have the same color;
	it is called \emph{rainbow} if its three elements have pairwise distinct colors.
	In this paper, a \(3\)-AP that is either monochromatic or rainbow is called a \emph{bad \(3\)-AP}.
	``Bad'' is not a standard term; it is used only for convenience, since our goal is precisely to find bad \(3\)-APs.
	The phrase ``monochromatic or rainbow \(3\)-AP'' appearing in other papers is completely equivalent to ``bad \(3\)-AP'' here.
	
	Why do we focus only on these two extreme cases? Intuitively, monochromatic means ``colors are completely repeated,''
	while rainbow means ``colors are completely distinct.'' If the interval is long enough and the coloring is ``exact'' enough,
	then no matter how one colors, it is difficult to avoid both extremes simultaneously: either some color is forced to repeat three times,
	or three different colors are forced to appear somewhere. The Gallai--van der Waerden number characterizes precisely ``how long is long enough.''
	
	\subsection{The Gallai--van der Waerden Number}\label{subsec:def}
	
	\begin{definition}[Gallai--van der Waerden number]\label{def:gw}
		For a positive integer \(r\), the Gallai--van der Waerden number \(\mathrm{GW}(r;3,3)\)
		is the smallest integer \(N\ge r\) such that for every \(n\ge N\),
		every exact \(r\)-coloring of \([1,n]\) contains a bad \(3\)-AP.
	\end{definition}
	
	\begin{remark}[Convention \(N\ge r\)]\label{rem:nr}
		The requirement \(N\ge r\) in the definition is a convention of this paper, intended to avoid the vacuous-truth issue caused by the nonexistence of exact \(r\)-colorings when \(n<r\).
		Without this requirement, the following technical problem arises: when \(n<r\), an exact \(r\)-coloring simply does not exist,
		and the sentence ``every exact \(r\)-coloring contains a bad \(3\)-AP'' is automatically true (logically vacuous).
		Then every \(N\le r\) would satisfy the definition, and \(\mathrm{GW}(r;3,3)\) would degenerate into a meaningless small number.
		With \(N\ge r\), the definition immediately gives
		\[
		\mathrm{GW}(r;3,3)\ge r.
		\]
		The original MORW definition \cite{MORW} does not explicitly emphasize this point; we adopt the convention \(N\ge r\) and use it consistently throughout all proofs.
	\end{remark}
	
	Intuitively, \(\mathrm{GW}(r;3,3)\) answers the question:
	``How long must an interval be so that every exact \(r\)-coloring is forced to contain a monochromatic or rainbow \(3\)-AP?''
	
	\subsection{Two Classical Facts}
	
	We directly cite the following two classical facts. The first is the special case of van der Waerden's theorem \cite{vdW} for \(2\) colors and \(3\)-APs;
	the second is a lemma from the MORW paper \cite{MORW}.
	
	\begin{lemma}[Classical van der Waerden number \cite{vdW}]\label{lem:w23}
		\(w(2;3)=9\). That is, every \(2\)-coloring of \([1,9]\) contains a monochromatic \(3\)-AP.
	\end{lemma}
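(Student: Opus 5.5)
The plan is to prove the two halves of the statement separately: a lower bound \(w(2;3)>8\), given by an explicit coloring, and an upper bound \(w(2;3)\le 9\), given by a short forcing argument on \([1,9]\). The cited result \cite{vdW} guarantees that \(w(2;3)\) is finite, but since its exact value is the whole content here, I would check both halves directly.

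For the lower bound I would exhibit the \(2\)-coloring of \([1,8]\) given by the word \(RRBBRRBB\), i.e.\ \(R=\{1,2,5,6\}\) and \(B=\{3,4,7,8\}\). I would then check that neither color class contains a \(3\)-AP. For \(R\), every \(3\)-AP \(\{a,a+d,a+2d\}\) inside \(\{1,2,5,6\}\) is ruled out by looking at differences: the only pairs at distance \(d\) with a third point at distance \(d\) beyond would be \(1,2,3\) (but \(3\notin R\)), \(1,5,9\), \(2,6,10\), and so on, none of which lie in \(R\). The set \(B=R+2\) is a translate of \(R\), so it is handled by the same check. This shows \(w(2;3)\ge 9\).

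For the upper bound, take any red/blue coloring of \([1,9]\) and suppose it has no monochromatic \(3\)-AP. The first step is to consider positions \(4\) and \(6\) and split into two cases.

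\textbf{Case 1: \(4\) and \(6\) have the same color, say red.} Then \(2\), \(5\) and \(8\) must all be blue, because \(\{2,4,6\}\), \(\{4,5,6\}\) and \(\{4,6,8\}\) are \(3\)-APs. But \(\{2,5,8\}\) is itself a \(3\)-AP, so it is a monochromatic blue one, a contradiction.

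\textbf{Case 2: \(4\) is red and \(6\) is blue, or symmetrically the reverse.} Here I would branch on the color of \(5\), using the reflection \(x\mapsto 10-x\) and the red/blue swap to reduce the number of branches. In each branch, successive \(3\)-AP constraints force the colors of further positions. These propagation chains are short: for example, with \(5\) red, the AP \(\{3,4,5\}\) forces \(3\) blue, \(\{4,5,6\}\) is already fine, and \(\{5,6,7\}\) leaves \(7\) free. Continuing branch by branch until every position is forced leads to a contradiction in every branch.

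I expect the main obstacle to be organizing Case 2 so that it stays short and visibly exhaustive. If the hand case analysis becomes unwieldy, the fallback is that \(2^9=512\) colorings is trivially small, so the paper's exhaustive-search program in the appendix can certify the upper bound directly; since the statement is already attributed to \cite{vdW}, citing it would also suffice.
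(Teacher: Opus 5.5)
The paper gives no proof of this lemma. It quotes the fact from \cite{vdW} and uses it as a black box, in the case \(s=2\) of Theorem~\ref{thm:gw4} and of the anchor lemma. Your self-contained verification of both bounds is therefore a genuinely different route, and the more informative one. As you note, van der Waerden's theorem on its own only guarantees that \(w(2;3)\) is finite, so the exact value \(9\) has to come from a check of this kind. Your lower bound \(RRBBRRBB\) is correct: no element of \(\{1,2,5,6\}\) is the average of two others, and \(\{3,4,7,8\}\) is a translate of it. Case 1 is complete.

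Case 2, however, is only asserted (``leads to a contradiction in every branch''). It is the part of the upper bound that actually needs checking, so as written the proof is unfinished. It closes quickly, and there is nothing to organize.

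\begin{itemize}
\item Your reduction works. The reflection \(x\mapsto 10-x\) followed by the color swap keeps \(4\) red and \(6\) blue but flips the color of \(5\). So you may assume \(5\) is red.
\item From there every color is forced, with no further branching:
  \begin{itemize}
  \item \(\{3,4,5\}\) makes \(3\) blue;
  \item \(\{3,6,9\}\) makes \(9\) red;
  \item \(\{1,5,9\}\) and \(\{5,7,9\}\) make \(1\) and \(7\) blue;
  \item \(\{1,2,3\}\) and \(\{6,7,8\}\) make \(2\) and \(8\) red.
  \end{itemize}
\item Then \(\{2,5,8\}\) is entirely red, a contradiction.
\end{itemize}

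With this chain written out, your proof is complete and strictly more self-contained than the paper's citation.

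Your computer fallback would also work, but only if you run the appendix program on the instance \((9,2)\). There rainbow \(3\)-APs are impossible and the only non-exact colorings are constant, so the program would certify the claim. That instance is not among those reported in Lemma~\ref{lem:machine}.
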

	
	\begin{lemma}[MORW Lemma 4.1 \cite{MORW}]\label{lem:morw31}
		Every \(3\)-coloring of \([1,9]\) contains a rainbow or monochromatic \(3\)-AP.
	\end{lemma}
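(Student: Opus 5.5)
The statement is MORW Lemma 4.1: every \(3\)-coloring of \([1,9]\) contains a rainbow or monochromatic \(3\)-AP. I would prove it by reducing to Lemma~\ref{lem:w23} and then handling the exact case by a short forcing argument.

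If \(\chi\) uses at most two colors, it is in effect a \(2\)-coloring of \([1,9]\). Lemma~\ref{lem:w23} then gives a monochromatic \(3\)-AP and we are done.

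So assume \(\chi\) is exact. Suppose for contradiction that it has no bad \(3\)-AP. Then every \(3\)-AP \(\{a,a+d,a+2d\}\) in \([1,9]\) uses exactly two colors. The key structural observation is that the middle element \(5\) lies in the four APs \(\{4,5,6\}\), \(\{3,5,7\}\), \(\{2,5,8\}\), \(\{1,5,9\}\), which pair \(1\!\leftrightarrow\!9\), \(2\!\leftrightarrow\!8\), \(3\!\leftrightarrow\!7\), \(4\!\leftrightarrow\!6\). In each symmetric pair \(\{x,10-x\}\), at least one element has color \(\chi(5)\), or else the two elements share a color different from \(\chi(5)\).

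Write \(\chi(5)=A\), and let \(B,C\) be the other two colors. The plan is a case analysis on which colors occupy \(1,9\) and \(2,8\), propagating constraints through short APs such as \(\{1,2,3\}\), \(\{1,3,5\}\), \(\{5,7,9\}\), \(\{3,6,9\}\) and \(\{1,4,7\}\). Since the forbidden patterns are invariant under the reflection \(x\mapsto 10-x\) and under permutations of \(B,C\), I would fix the pair \((\chi(1),\chi(9))\) up to symmetry. The possibilities are \((A,A)\), \((A,B)\), \((B,B)\) and \((B,A)\sim(A,B)\). In each case I would chase forced colors, for example:
\begin{itemize}[leftmargin=1.5em]
\item if \(\chi(1)=\chi(9)=B\), then \(\{1,5,9\}\) is fine, but \(\chi(3)\) and \(\chi(7)\) become constrained by \(\{1,3,5\}\) and \(\{5,7,9\}\), since neither may be \(C\);
\item if \(\chi(1)=\chi(9)=A\), the colors \(B,C\) must appear somewhere inside \([2,8]\), and the APs centered at \(5\) force their mirror positions.
\end{itemize}
The aim is to show that \(C\) can never appear without creating a rainbow AP, while repeating \(A\) or \(B\) creates a monochromatic one, contradicting exactness.

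The main obstacle is keeping this case tree small and rigorous. I expect a handful of subcases, each closed in two or three forced steps. The fallback, consistent with the paper's methodology, is deterministic exhaustive search over the \(3^9=19683\) colorings of \([1,9]\). Equivalently, one can cite MORW directly, since the statement is quoted from \cite{MORW} and only needs to be invoked there.
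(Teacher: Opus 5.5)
The paper gives no proof of this lemma; it imports it from MORW \cite{MORW}. Your closing remark (cite MORW) is therefore exactly the paper's treatment, and your reduction of colorings with at most two colors to Lemma~\ref{lem:w23} is correct.

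As a self-contained argument, though, the proposal has a gap. The exact case is the entire content of the lemma, and it is only announced (``I would chase forced colors''), never carried out. The case list you announce is also wrong. With $\chi(5)=A$, the AP $\{1,5,9\}$ excludes $(\chi(1),\chi(9))=(A,A)$, which is monochromatic, so your second bullet analyzes an empty case. It also excludes $(B,C)$, which is rainbow. Up to the reflection $x\mapsto 10-x$ and the swap $B\leftrightarrow C$, only $(B,B)$ and $(A,B)$ remain.

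Your strategy does close quickly. In most steps below, two APs through one position exclude two colors there.
\begin{itemize}
\item $(\chi(1),\chi(9))=(B,B)$. By $\{1,3,5\}$ and $\{5,7,9\}$, $\chi(3),\chi(7)\in\{A,B\}$, and they are not both $A$ by $\{3,5,7\}$.
If $\chi(3)=\chi(7)=B$: $\{1,2,3\},\{1,4,7\}$ give $\chi(2),\chi(4)\in\{A,C\}$. Then $\{2,3,4\}$ forces $\chi(2)=\chi(4)$, and $\{3,4,5\}$ forces this common value to be $A$. Next $\{2,5,8\},\{7,8,9\}$ force $\chi(8)=C$ and $\{4,5,6\},\{3,6,9\}$ force $\chi(6)=C$, so $\{5,6,7\}$ is rainbow.
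If $(\chi(3),\chi(7))=(A,B)$ (the case $(B,A)$ is its mirror image): $\chi(4)=C$ by $\{3,4,5\},\{1,4,7\}$; $\chi(2)=A$ by $\{1,2,3\},\{2,3,4\}$; $\chi(6)=A$ by $\{5,6,7\},\{4,5,6\}$; $\chi(8)=B$ by $\{2,5,8\},\{6,7,8\}$. So $\{7,8,9\}$ is monochromatic.
\item $(\chi(1),\chi(9))=(A,B)$. By $\{1,3,5\}$, $\chi(3)\in\{B,C\}$, and by $\{5,7,9\}$, $\chi(7)\in\{A,B\}$.
If $\chi(3)=C$, then $\chi(7)=A$ by $\{3,5,7\}$. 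Successively, $\chi(4)=C$ (by $\{1,4,7\},\{3,4,5\}$), $\chi(2)=A$ (by $\{1,2,3\},\{2,3,4\}$), $\chi(6)=C$ (by $\{5,6,7\},\{4,5,6\}$), and $\chi(8)=B$ (by $\{2,5,8\},\{4,6,8\}$). So $\{6,7,8\}$ is rainbow.
If $\chi(3)=B$ and $\chi(7)=A$: $\chi(6)=C$ by $\{3,6,9\},\{5,6,7\}$, then $\chi(4)=A$ by $\{3,4,5\},\{4,5,6\}$, so $\{1,4,7\}$ is monochromatic.
If $\chi(3)=\chi(7)=B$: $\chi(6)=A$ by $\{3,6,9\},\{5,6,7\}$ and $\chi(8)=A$ by $\{7,8,9\},\{6,7,8\}$. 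Also $\{1,2,3\},\{3,4,5\}$ exclude $C$ at positions $2$ and $4$. So $C$ is never used, contradicting exactness.
\end{itemize}
Written out this way, your plan becomes a complete computer-free proof of the lemma, which is more than the paper supplies; it relies on the citation alone.
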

	
	\begin{theorem}[MORW Theorem 4.2 \cite{MORW}]\label{thm:morw33}
		\(\mathrm{GW}(3;3,3)=9\).
	\end{theorem}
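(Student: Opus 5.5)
The plan is to prove the two inequalities \(\mathrm{GW}(3;3,3)\le 9\) and \(\mathrm{GW}(3;3,3)\ge 9\) separately. The upper bound comes directly from Lemma~\ref{lem:morw31}. The lower bound needs an explicit exact \(3\)-coloring of \([1,8]\) with no bad \(3\)-AP.

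\emph{Upper bound.} Fix \(n\ge 9\) and an exact \(3\)-coloring \(\chi\) of \([1,n]\). Restricting \(\chi\) to \([1,9]\) gives a coloring that uses at most \(3\) colors. Lemma~\ref{lem:morw31} is stated for arbitrary \(3\)-colorings, not only exact ones, so it applies and yields a monochromatic or rainbow \(3\)-AP inside \([1,9]\subseteq[1,n]\). Hence \(N=9\) satisfies the condition in Definition~\ref{def:gw}. It also satisfies \(N\ge r=3\), so \(\mathrm{GW}(3;3,3)\le 9\). The only delicate point is the exact-versus-at-most issue: Lemma~\ref{lem:morw31} must be read with the "at most" convention. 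This is consistent with how it is stated.

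\emph{Lower bound.} For any \(N\) with \(3\le N\le 8\), the requirement "for every \(n\ge N\)" includes \(n=8\). So it suffices to show that some exact \(3\)-coloring of \([1,8]\) has no bad \(3\)-AP.

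I would build it by placing the unique third color at the endpoint \(1\). Every \(3\)-AP through \(1\) is then \(\{1,1+d,1+2d\}\), and it is harmless provided \(1+d\) and \(1+2d\) share a color. This forces \(\chi(2)=\chi(3)\), \(\chi(3)=\chi(5)\) and \(\chi(4)=\chi(7)\). On \([2,8]\) we then need a \(2\)-coloring with no monochromatic \(3\)-AP. Since only two colors occur there, rainbow \(3\)-APs are automatically impossible.

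Propagating the constraints:
\begin{itemize}[nosep]
\item \(\{3,4,5\}\) forces \(\chi(4)\ne\chi(3)\);
\item \(\{2,5,8\}\) forces \(\chi(8)=\chi(4)\);
\item \(\{4,6,8\}\) forces \(\chi(6)=\chi(2)\).
\end{itemize}
This gives the candidate
\[
\chi=(3,1,1,2,1,1,2,2)\quad\text{on }1,2,\dots,8.
\]

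The remaining routine step is to list all \(3\)-APs in \([1,8]\) (those with \(d=1,2,3\)) and confirm each is neither monochromatic nor rainbow.
\begin{itemize}[nosep]
\item The three APs through \(1\) have color patterns \(3,1,1\), \(3,1,1\) and \(3,2,2\).
\item The APs in \([2,8]\) are two-colored, and checking them shows none is monochromatic.
\end{itemize}
Thus \(\mathrm{GW}(3;3,3)>8\), and together with the upper bound this gives equality.

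The main obstacle is finding this extremal coloring; the upper bound is immediate. I would locate it by the "isolated third color at an endpoint" forcing argument above rather than by search.
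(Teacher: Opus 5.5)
Your proof is correct. The paper, however, gives no proof of this statement: it imports it verbatim from MORW, alongside Lemma~\ref{lem:morw31}. So your argument is a genuine derivation where the paper only cites.

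\textbf{What you do differently.} You reduce the theorem to Lemma~\ref{lem:morw31} using the same template the paper later uses for Theorem~\ref{thm:gw4}.
\begin{itemize}
\item \emph{Upper bound.} You restrict an exact \(3\)-coloring of \([1,n]\), \(n\ge 9\), to \([1,9]\). This is Lemma~\ref{lem:restrict} with \(s=3\). It is legitimate because, in the paper's terminology, a \(3\)-coloring need not use all three colors. Hence Lemma~\ref{lem:morw31} covers the restriction however many colors it uses.
\item \emph{Lower bound.} You give an explicit exact \(3\)-coloring of \([1,8]\). Your witness \((3,1,1,2,1,1,2,2)\) checks out: each of the twelve \(3\)-APs in \([1,8]\) carries exactly two colors. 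Your reading of Definition~\ref{def:gw} is also right: one bad-AP-free exact coloring at \(n=8\) rules out every \(N\le 8\).
\end{itemize}

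\textbf{What each approach buys.} The paper's citation buys brevity. Your version buys self-containment in two ways. First, the only external input for \(r=3\) becomes the finite statement Lemma~\ref{lem:morw31}, which the exhaustive search of Lemma~\ref{lem:machine} could confirm directly (only \(3^9\) colorings). Second, it supplies an explicit extremal coloring for \(r=3\), which the paper never displays, unlike the \(r=4\) witness \((0,1,1,2,1,2,2,3)\). Your ``isolated third color at an endpoint'' forcing is also a good way to find such a witness by hand rather than by search. It reduces the problem to a \(2\)-coloring of \([2,8]\) with no monochromatic \(3\)-AP, where rainbow progressions are impossible for free.
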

	
	\begin{theorem}[MORW Theorem 4.3 \cite{MORW}]\label{thm:morw43}
		For \(r\ge 12\), \(\mathrm{GW}(r;3,3)=r\).
	\end{theorem}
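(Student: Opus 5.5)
The equality has two halves. The lower bound \(\mathrm{GW}(r;3,3)\ge r\) is immediate from Definition~\ref{def:gw} and Remark~\ref{rem:nr}. So all the work is the upper bound: for every \(n\ge r\ge 12\), every exact \(r\)-coloring \(\chi\) of \([1,n]\) contains a bad \(3\)-AP. I would argue by contradiction. Suppose \(\chi\) is an exact \(r\)-coloring of \([1,n]\) with no bad \(3\)-AP. Then every \(3\)-AP \(\{a,a+d,a+2d\}\subseteq[1,n]\) carries exactly two colors.

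\textbf{Step 1 (windows need many colors).} Both Lemma~\ref{lem:w23} and Lemma~\ref{lem:morw31} are translation invariant, because a translate of a \(3\)-AP is a \(3\)-AP. Since \(n\ge 12\), any window \(W=[x,x+8]\subseteq[1,n]\) of nine consecutive integers is available. If \(\chi|_W\) used at most two colors, Lemma~\ref{lem:w23} would give a monochromatic \(3\)-AP. If it used exactly three, Lemma~\ref{lem:morw31} would give a bad one. Hence every such window uses at least four colors.

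\textbf{Step 2 (local forcing rules).} I would record the elementary constraints imposed by the absence of rainbow \(3\)-APs. If \(\chi(a)\neq\chi(b)\), then each of \(2a-b\), \(2b-a\) and (when \(a+b\) is even) \((a+b)/2\) that lies in \([1,n]\) must receive a color in \(\{\chi(a),\chi(b)\}\). Two special cases matter most. With \(d=1\), every three consecutive integers carry at most two colors. With \(d=2\), the same holds for \(\{i,i+2,i+4\}\). Combined with the ban on monochromatic \(3\)-APs, this strongly restricts the color pattern on short windows. I would start with the leftmost positions where new colors first appear. Let \(p_1<p_2<p_3<p_4\) be the first occurrences of four distinct colors inside the first window \([1,9]\); Step 1 guarantees they exist. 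I would then propagate the forcing rules along the \(3\)-APs connecting these points.

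\textbf{Step 3 (contradiction using \(r\ge 12\)).} The aim is to show that a fourth color cannot coexist locally with the pattern forced by the first three. The idea is that any new color \(c\) at position \(p\) forces the colors at \(2p-q\) and \(2q-p\), for every earlier \(q\) of a different color, into \(\{c,\chi(q)\}\). With many colors around, this yields a pair \(q,q'\) with \(\chi(q)\ne\chi(q')\), both different from \(c\), such that some forced position must simultaneously lie in \(\{c,\chi(q)\}\) and in \(\{c,\chi(q')\}\), hence take color \(c\). This creates additional occurrences of \(c\), which eventually produce a monochromatic \(3\)-AP. The hypothesis \(r\ge 12\), together with \(n\ge r\), is used to ensure enough room and enough distinct colors near the start that the forced positions stay inside \([1,n]\).

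\textbf{Main obstacle.} I expect Step 3 to be the hard part: the case analysis over how four or more colors can be arranged in a nine-element window without creating a rainbow \(3\)-AP. I would first try to reduce it to a small number of patterns via the \(d=1\) and \(d=2\) constraints, up to the reflection \(x\mapsto n+1-x\) and renaming of colors. Any patterns that resist a hand argument could be checked by finite exhaustive search over colorings of \([1,9]\) with at least four colors and no bad \(3\)-AP.
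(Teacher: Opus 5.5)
Your lower bound, Step~1 (every window of nine consecutive integers carries at least four colors, by translating Lemmas~\ref{lem:w23} and~\ref{lem:morw31}), and the forcing rules of Step~2 are correct. However, Step~3, where the theorem actually has to be proved, is not an argument.

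The ``double forcing'' you describe needs one position that is a third term both of a \(3\)-AP through \(p,q\) and of a \(3\)-AP through \(p,q'\). That is an arithmetic coincidence such as \(2p-q=(p+q')/2\), i.e.\ \(3p=2q+q'\), and you give no reason it must occur. The claim that the resulting extra occurrences of \(c\) ``eventually produce a monochromatic \(3\)-AP'' is asserted, not derived.

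You also misattribute the role of \(r\ge 12\). When \(n\) is much larger than \(r\), exactness of an \(r\)-coloring of \([1,n]\) says nothing about how many colors occur near the left end. The only usable consequence is \(n\ge 12\). What must be shown is a statement about a fixed short interval with an arbitrary number of colors: every coloring of \([1,12]\) contains a bad \(3\)-AP.

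The missing idea in MORW's hand proof (sketched in Section~\ref{sec:compare}) is the choice of anchor. By the \(d=1\) rule, if no two adjacent integers in \([3,7]\) shared a color, the colors of \(3,\dots,7\) would be \(ABABA\), and \(\{3,5,7\}\) would be monochromatic. So some \(a\in[3,6]\) has \(\chi(a)=\chi(a+1)\). Forcing the colors of \(a+2,\dots,a+6\) and \(a-1,a-2\) from this pair gives a contradiction inside \([a-2,a+6]\subseteq[1,12]\). Your anchor, the first occurrences of four colors, does not pin down any comparable propagation, which is why Step~3 stalls.

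Your exhaustive-search fallback would also close the gap, but then Steps~2--3 become unnecessary. Step~1 plus a search showing that no coloring of \([1,9]\) with \(4\) to \(8\) colors avoids bad \(3\)-APs (nine colors are trivial via \(\{1,2,3\}\)) is exactly the paper's own route: the anchor Lemma~\ref{lem:anchor} fed by Lemma~\ref{lem:machine}. Restricting to \([1,9]\) then gives the conclusion already for \(r\ge 9\). In that form the proof is computer-assisted, and its correctness rests on a search you have proposed but not carried out.
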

	
	\begin{remark}
		Note that the formal statement of Theorem~\ref{thm:morw43} concerns only \(r\ge 12\).
		A stronger upper bound can be read directly from MORW's proof \cite{MORW}: any coloring without a bad \(3\)-AP has length at most \(11\).
		Therefore, for every \(r\),
		\[
		\mathrm{GW}(r;3,3)\le \max\{r,12\}.
		\]
		When \(r\le 12\), this gives \(\mathrm{GW}(r;3,3)\le 12\);
		when \(r\ge 12\), this gives \(\mathrm{GW}(r;3,3)\le r\).
		Section~\ref{sec:compare} discusses this point in detail.
	\end{remark}
	
	\subsection{Relation to Anti-van der Waerden Numbers and the Rainbow AP Literature}\label{subsec:literature}
	
	The framework of ``exact colorings plus monochromatic/rainbow \(3\)-APs'' is not isolated.
	It is directly related to the following two strands of literature, which help locate the novelty of this paper.
	
	\paragraph{Anti-van der Waerden numbers.}
	Jungić, Licht, Mahdian, Nešetřil, and Radoičić \cite{Jungic} systematically studied rainbow arithmetic progressions in 2003
	and introduced the anti-van der Waerden number \(\operatorname{aw}(S,k)\):
	the smallest \(r\) such that every \(r\)-coloring (using every color at least once) contains a rainbow \(k\)-AP.
	Berikkyzy, Schulte, and Young \cite{Berikkyzy} further studied exact values of \(\operatorname{aw}([1,n],3)\) in 2017.
	The ``rainbow \(3\)-AP'' target in this paper is consistent with this strand; the difference is that we also require exact colorings
	and treat ``rainbow or monochromatic'' as a joint target, rather than seeking only rainbow progressions.
	
	\paragraph{Existence and constructions of rainbow APs.}
	Axenovich and Fon-Der-Flaass \cite{Axenovich} studied the existence of rainbow arithmetic progressions;
	Conlon, Jungić, and Radoičić \cite{Conlon} studied the existence of rainbow \(4\)-APs;
	Butler et al. \cite{Butler} gave a survey of rainbow arithmetic progressions.
	The classical result of Behrend \cite{Behrend} on sets without \(3\)-APs provides the lower-bound constructions underlying many of these studies.
	The Gallai--van der Waerden number in this paper is a ``mixed version'' of these rainbow results and the classical van der Waerden number \cite{vdW}:
	it allows both rainbow and monochromatic conclusions simultaneously, and is therefore weaker than the separate rainbow AP or monochromatic AP problems,
	but for this reason it is also more amenable to exact determination.
	
	\paragraph{Positioning of this paper.}
	We do not claim to introduce ``rainbow APs'' or ``anti-van der Waerden numbers'' for the first time;
	these notions belong to the literature above. The novelty of this paper lies in:
	\begin{enumerate}[label=(\arabic*)]
		\item completely determining the exact values of \(\mathrm{GW}(r;3,3)\) for all \(r\ge 3\);
		\item using the \([1,9]\) anchor lemma to lower the purely theoretical threshold of MORW \cite{MORW} from \(12\) to \(9\);
		\item combining with finite exhaustive search to further lower the threshold to \(5\), and giving exact values for \(r=4,\dots,11\).
	\end{enumerate}
	
	\subsection{Proof Roadmap}
	
	The proof in this paper has three layers. Reading the roadmap first makes the overall logic easier to follow.
	
	\begin{enumerate}[label=(\arabic*)]
		\item \textbf{Small numbers of colors \(r=4,5,6,7,8\).}
		We first prove \(\mathrm{GW}(4;3,3)=9\), and then successively prove
		\(\mathrm{GW}(5;3,3)=5\), \(\mathrm{GW}(6;3,3)=6\),
		\(\mathrm{GW}(7;3,3)=7\), and \(\mathrm{GW}(8;3,3)=8\).
		Each step splits into three ranges of interval length:
		\begin{itemize}
			\item very small \(n\) (usually \(n=r\)), handled directly by ``all colors distinct, hence rainbow'';
			\item intermediate \(n\) (usually \(r\le n\le 8\)), completed by finite computer verification;
			\item large \(n\ge 9\), restricted to the anchor interval \([1,9]\), followed by a case analysis according to the actual number of colors used.
		\end{itemize}
		The key point is that each step relies only on known results for smaller numbers of colors, together with independent computer verification,
		so no circular dependence arises.
		\item \textbf{Anchor lemma.}
		After \(\mathrm{GW}(3;3,3)\) through \(\mathrm{GW}(8;3,3)\) are known,
		we prove that every coloring of \([1,9]\) using at most \(9\) colors contains a bad \(3\)-AP.
		This step is purely theoretical and does not rely on new computer verification.
		\item \textbf{Large numbers of colors \(r\ge 9\).}
		Take any exact \(r\)-coloring of \([1,n]\) with \(n\ge r\ge 9\), and restrict it to \([1,9]\).
		The actual number of colors used is at most \(9\). By the anchor lemma, the restricted coloring contains a bad \(3\)-AP,
		hence the original coloring also contains a bad \(3\)-AP. Therefore \(\mathrm{GW}(r;3,3)\le r\),
		and combining with the lower bound gives \(\mathrm{GW}(r;3,3)=r\).
	\end{enumerate}
	
	This order avoids circular dependence: the anchor lemma is established only after \(\mathrm{GW}(8;3,3)\) has been proved,
	and the computer verification used in the proofs of \(\mathrm{GW}(5;3,3)\) through \(\mathrm{GW}(8;3,3)\) consists of independent finite facts.
	In other words, computer verification is used only for ``small-scale, enumerable'' facts, while the anchor lemma and the argument for large numbers of colors are purely theoretical.
	
	\section{Computer Verification}
	
	The algorithm is described at a pseudocode level in Appendix~\ref{app:algorithm}
	(Algorithm~\ref{alg:verify}); the Python implementation is given in Appendix~\ref{app:code}.
	
	All computer-assisted conclusions in this section come from deterministic exhaustive search programs.
	For each \((N,R)\), the program enumerates all \(R^N\) colorings of \([1,N]\)
	and checks whether there exists an \emph{exact \(R\)-coloring} for which every \(3\)-AP
	is neither monochromatic nor rainbow. If no such coloring exists, we say that the instance has ``no exact counterexample.''
	
	\begin{remark}[Why exhaustive search can serve as a proof]
		For fixed \(N\) and \(R\), all \(R^N\) colorings form a finite set.
		The program checks every element of this finite set one by one, so ``the program runs to completion and finds no counterexample''
		is a definite finite fact. Appendix~\ref{app:code} gives the complete program, which anyone can reproduce.
		This differs from ``random search'' or ``heuristic search'', both of which check only some colorings and cannot serve as a proof;
		exhaustive search, in contrast, checks all colorings and can therefore be part of a proof.
	\end{remark}
	
	\begin{lemma}[Computer verification]\label{lem:machine}
		The following facts hold:
		\begin{enumerate}[label=(\arabic*)]
			\item For \((N,R)=(8,4)\), there exists an exact counterexample
			\[
			(0,1,1,2,1,2,2,3),
			\]
			that is, the color string \(R\,B\,B\,G\,B\,G\,G\,Y\), which contains no bad \(3\)-AP.
			\item For each of the following pairs \((N,R)\), there is no exact counterexample:
			\[
			(9,4),\ (6,5),\ (7,5),\ (8,5),\ (9,5),\ (7,6),\ (8,6),\ (9,6),\ (8,7),\ (9,7),\ (9,8).
			\]
		\end{enumerate}
	\end{lemma}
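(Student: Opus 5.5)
The statement consists of one explicit witness and finitely many nonexistence claims, so the proof is a direct check of part (1) followed by an exhaustive finite search for part (2).

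\textbf{Part (1).} I would verify this by hand by listing every \(3\)-AP \(\{a,a+d,a+2d\}\subseteq[1,8]\); these have \(d\in\{1,2,3\}\). With colors \((0,1,1,2,1,2,2,3)\) at positions \(1,\dots,8\), the color triples are:
\begin{itemize}
\item \(d=1\): \((0,1,1),(1,1,2),(1,2,1),(2,1,2),(1,2,2),(2,2,3)\);
\item \(d=2\): \((0,1,1),(1,2,2),(1,1,2),(2,2,3)\);
\item \(d=3\): \((0,2,2),(1,1,3)\).
\end{itemize}
Each triple has exactly two distinct colors, so none is monochromatic or rainbow. All four colors \(0,1,2,3\) occur, so the coloring is exact. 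This settles (1) without any computation.

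\textbf{Part (2).} For each listed pair \((N,R)\) I would run the deterministic search of Algorithm~\ref{alg:verify}, organized as a backtracking search rather than a naive enumeration of all \(R^N\) strings. Two reductions keep it a complete search.
\begin{itemize}
\item \emph{Symmetry reduction.} Permuting colors preserves both exactness and the property of being bad. Hence it suffices to enumerate colorings in canonical form, where colors are labeled in order of first appearance. Equivalently, these are set partitions of \([1,N]\) into exactly \(R\) blocks.
\item \emph{Prefix pruning.} Every \(3\)-AP inside \([1,k]\) is also a \(3\)-AP inside \([1,N]\). So once a prefix \(\chi|_{[1,k]}\) contains a bad \(3\)-AP, no extension of it can be a counterexample, and the branch can be discarded. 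I would also discard a prefix if the number of colors it uses plus the number of remaining positions is less than \(R\), since it can then never become exact.
\end{itemize}
The search reports ``no exact counterexample'' exactly when every branch dies. Each instance then becomes a finite, checkable fact.

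I note that the instances must be checked separately for each pair, with no inductive shortcut. Restricting an exact \(R\)-coloring of \([1,N+1]\) to \([1,N]\) may use only \(R-1\) colors, so nonexistence for \((N,R)\) does not imply nonexistence for \((N+1,R)\). This is why the list contains every pair that the later arguments need.

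The main obstacle is not mathematical but concerns the reliability of the computation. To address it, I would validate the program on known cases before trusting it on the listed pairs:
\begin{itemize}
\item with \(R=2\) and only the monochromatic condition checked, it should reproduce \(w(2;3)=9\) (Lemma~\ref{lem:w23});
\item it should reproduce Lemma~\ref{lem:morw31} for \(R=3\), \(N=9\);
\item it should find the witness of part (1) for \((8,4)\);
\item it should find a counterexample for \((N,R)=(8,3)\), consistent with \(\mathrm{GW}(3;3,3)=9\).
\end{itemize}
I would also cross-check the pruned search against a plain brute-force enumeration of all \(R^N\) colorings for the smaller instances. These instances are at most \(8^9\approx 1.3\times10^8\) strings, which is still feasible.
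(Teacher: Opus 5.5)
Your proposal is correct and takes essentially the paper's route: part (1) is the same direct check of the twelve $3$-APs that the paper carries out in the lower-bound half of Theorem~\ref{thm:gw4}, and part (2) is the same finite exhaustive search. Your canonical-labeling backtracking with prefix pruning is a faster implementation than the paper's enumeration of all colorings with the first color fixed, but both of your reductions are sound: color permutations preserve exactness and badness, and a bad $3$-AP in a prefix persists in every extension, so the search still loses no counterexample.
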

	
	\begin{proof}
		This is a finite exhaustive verification. The total numbers of colorings for the instances are given in Table~\ref{tab:instances}.
		Appendix~\ref{app:code} gives the complete runnable program. The program enumerates all \(R^N\) colorings one by one,
		enumerates all \(3\)-APs for each coloring and decides whether each is monochromatic or rainbow, and then filters exact colorings.
		For \((8,4)\), the program outputs the counterexample \((0,1,1,2,1,2,2,3)\), which also validates its detection capability;
		for the remaining instances, the program runs to completion and reports no exact counterexample.
	\end{proof}
	
	\begin{table}[htbp]
		\centering
		\caption{Summary of computer-verification instances (exact \(R\)-coloring convention)}
		\label{tab:instances}
		\begin{tabularx}{\textwidth}{c c c X}
			\toprule
			\((N,R)\) & \(R^N\) & Exact counterexample & Conclusion\\
			\midrule
			\((8,4)\) & \(65\,536\) & \((0,1,1,2,1,2,2,3)\) & Counterexample exists\\
			\((9,4)\) & \(262\,144\) & none & Every exact \(4\)-coloring of \([1,9]\) contains a bad \(3\)-AP\\
			\((6,5)\) & \(15\,625\) & none & Every exact \(5\)-coloring of \([1,6]\) contains a bad \(3\)-AP\\
			\((7,5)\) & \(78\,125\) & none & Every exact \(5\)-coloring of \([1,7]\) contains a bad \(3\)-AP\\
			\((8,5)\) & \(390\,625\) & none & Every exact \(5\)-coloring of \([1,8]\) contains a bad \(3\)-AP\\
			\((9,5)\) & \(1\,953\,125\) & none & Every exact \(5\)-coloring of \([1,9]\) contains a bad \(3\)-AP\\
			\((7,6)\) & \(279\,936\) & none & Every exact \(6\)-coloring of \([1,7]\) contains a bad \(3\)-AP\\
			\((8,6)\) & \(1\,679\,616\) & none & Every exact \(6\)-coloring of \([1,8]\) contains a bad \(3\)-AP\\
			\((9,6)\) & \(10\,077\,696\) & none & Every exact \(6\)-coloring of \([1,9]\) contains a bad \(3\)-AP\\
			\((8,7)\) & \(5\,764\,801\) & none & Every exact \(7\)-coloring of \([1,8]\) contains a bad \(3\)-AP\\
			\((9,7)\) & \(40\,353\,607\) & none & Every exact \(7\)-coloring of \([1,9]\) contains a bad \(3\)-AP\\
			\((9,8)\) & \(134\,217\,728\) & none & Every exact \(8\)-coloring of \([1,9]\) contains a bad \(3\)-AP\\
			\bottomrule
		\end{tabularx}
	\end{table}
	
	\begin{remark}[Exact \(R\)-colorings versus at most \(R\) colors]\label{rem:exact-vs-atmost}
		All conclusions of the form ``no exact counterexample'' in Table~\ref{tab:instances} are verified for \emph{exact} \(R\)-colorings.
		The anchor lemma (Lemma~\ref{lem:anchor}) requires the stronger conclusion ``at most \(9\) colors.''
		The two do not conflict, for the following reason.
		Given a coloring of \([1,9]\) using at most \(9\) colors, let \(s\le 9\) be the actual number of colors used.
		\begin{itemize}
			\item If \(s\le 8\), then the coloring is covered by known results for smaller numbers of colors (\(w(2;3)=9\) \cite{vdW}, \(\mathrm{GW}(3;3,3)=9\) \cite{MORW},
			\(\mathrm{GW}(4;3,3)=9\), and Theorems~\ref{thm:gw5}--\ref{thm:gw8});
			\item If \(s=9\), then the coloring is an exact \(9\)-coloring, covered by ``all positions have distinct colors, hence \((1,2,3)\) is a rainbow \(3\)-AP.''
		\end{itemize}
		Therefore, the anchor lemma does not need separate computer verification for the ``at most \(R\) colors'' convention; the exact convention in Table~\ref{tab:instances} already suffices.
		If the reader wishes to verify the ``at most \(R\) colors'' convention directly, it suffices to remove the ``exact'' filter from the program and check all colorings (including non-exact ones);
		for the case \((9,4)\) with at most \(4\) colors, we have separately checked that no counterexample exists either.
	\end{remark}
	
	\section{Restriction Method and Heredity}
	
	This section isolates the ``restriction to a subinterval'' argument used repeatedly below.
	Intuitively, the restriction method says: if a coloring of a large interval contains no bad \(3\)-AP,
	then its restriction to any subinterval also contains no bad \(3\)-AP.
	Conversely, if the restriction to some subinterval contains a bad \(3\)-AP, then the large interval also contains a bad \(3\)-AP.
	Therefore, as soon as the anchor interval \([1,9]\) guarantees the existence of a bad \(3\)-AP, this property can be ``inherited'' by longer intervals.
	
	\begin{lemma}[Heredity]\label{lem:inherit}
		Let \(1\le m\le n\). If some coloring of \([1,n]\) contains no bad \(3\)-AP,
		then its restriction to the subinterval \([1,m]\) also contains no bad \(3\)-AP.
	\end{lemma}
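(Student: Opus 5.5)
The plan is to prove the contrapositive, which is the useful direction: if the restriction \(\chi|_{[1,m]}\) contains a bad \(3\)-AP, then so does \(\chi\) on \([1,n]\). The argument rests on two observations. First, being a \(3\)-AP is a property of a set of integers, not of the ambient interval. Second, being monochromatic or rainbow depends only on the colors assigned to the three elements.

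The steps are as follows.

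\begin{enumerate}[label=(\arabic*)]
	\item Fix a coloring \(\chi:[1,n]\to\{1,\dots,r\}\) and let \(\chi'=\chi|_{[1,m]}\). Suppose \(\chi'\) has a bad \(3\)-AP \(\{a,a+d,a+2d\}\) with \(a\ge 1\), \(d\ge 1\), \(a+2d\le m\).
	\item Since \(m\le n\), we have \(a+2d\le n\), so the same triple is a \(3\)-AP inside \([1,n]\).
	\item By definition of restriction, \(\chi'(x)=\chi(x)\) for every \(x\in[1,m]\). Hence the three colors under \(\chi\) coincide with those under \(\chi'\).
	\item Therefore the triple is monochromatic (respectively rainbow) under \(\chi\) exactly when it is so under \(\chi'\). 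So \(\chi\) contains a bad \(3\)-AP, contradicting the hypothesis.
\end{enumerate}

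There is no real obstacle here; the proof is immediate from the definitions. The one point deserving a sentence is exactness. The restricted coloring \(\chi'\) may use fewer than \(r\) colors, so it need not be an exact \(r\)-coloring. The lemma, however, makes no exactness claim: badness of a \(3\)-AP is defined purely in terms of colors of its elements, independent of how many colors the whole coloring uses.

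I would add a remark after the proof recording this point, since it is exactly why later applications restrict to \([1,9]\) and then split into cases by the actual number \(s\) of colors used, as in Remark~\ref{rem:exact-vs-atmost}.
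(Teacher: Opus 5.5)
Your proposal is correct and is the same argument as the paper's proof. A bad \(3\)-AP in \([1,m]\) satisfies \(a+2d\le m\le n\), so it is also a \(3\)-AP in \([1,n]\). Its colors are unchanged under \(\chi\), so \(\chi\) would contain a bad \(3\)-AP, a contradiction.

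Your side remark that the restriction need not be exact is accurate. It matches why the later applications split by the actual number of colors \(s\).
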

	
	\begin{proof}
		Suppose for contradiction that the restricted coloring contains a bad \(3\)-AP in \([1,m]\).
		Then the three positions of this \(3\)-AP all lie in \([1,m]\), hence also in \([1,n]\),
		and the color relations (all equal or pairwise distinct) are completely unchanged.
		Thus the original coloring also contains a bad \(3\)-AP, a contradiction.
	\end{proof}
	
	\begin{lemma}[Restriction method]\label{lem:restrict}
		Let \(1\le s\le r\). Suppose that
		\[
		\text{every coloring of }[1,9]\text{ using at most }s\text{ colors contains a bad }3\text{-AP}.
		\]
		Let \(n\ge 9\), and let \(\chi:[1,n]\to[r]\) be an exact \(r\)-coloring.
		If the restricted coloring \(\chi|_{[1,9]}\) uses at most \(s\) colors,
		then \(\chi\) contains a bad \(3\)-AP.
	\end{lemma}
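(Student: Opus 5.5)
The argument is a direct application of the hypothesis to the restriction, with Lemma~\ref{lem:inherit} (in contrapositive form) transferring the conclusion back to \([1,n]\). Since \(n\ge 9\), the interval \([1,9]\) is contained in \([1,n]\). This means the restricted coloring \(\chi|_{[1,9]}:[1,9]\to[r]\) is well defined. By assumption it uses at most \(s\) colors.

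The key steps, in order, are as follows.

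First, I would make precise that \(\chi|_{[1,9]}\) is a coloring of \([1,9]\) ``using at most \(s\) colors'' in the sense of the hypothesis. Its image is a subset of \([r]\) of size at most \(s\). If the hypothesis is read with colors drawn from a fixed palette \(\{1,\dots,s\}\), I would compose with a bijection from the image onto a subset of \([s]\). Renaming colors injectively preserves both ``all three equal'' and ``pairwise distinct,'' so it preserves bad \(3\)-APs in both directions. This lets me apply the hypothesis and conclude that \(\chi|_{[1,9]}\) contains a bad \(3\)-AP \(\{a,a+d,a+2d\}\subseteq[1,9]\).

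Second, I would observe that this progression lies in \([1,n]\), and that its three colors under \(\chi\) coincide with those under \(\chi|_{[1,9]}\). So it is monochromatic or rainbow for \(\chi\) as well. Equivalently, this is the contrapositive of Lemma~\ref{lem:inherit} with \(m=9\).

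There is no real obstacle. The only point needing care is the color-renaming step, that is, checking that the hypothesis is stated up to the actual number of colors used rather than for colors in a specific range. I would handle this by the relabeling remark above. Exactness of \(\chi\) and the relation \(s\le r\) play no role beyond making the setup consistent.
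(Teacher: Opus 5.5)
Your proposal is correct and matches the paper's proof: apply the hypothesis to \(\chi|_{[1,9]}\), then note that the resulting bad \(3\)-AP lies in \([1,n]\) with its colors unchanged under \(\chi\). Your color-relabeling remark is harmless extra care about the palette that the paper leaves implicit.
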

	
	\begin{proof}
		The restricted coloring \(\chi|_{[1,9]}\) is a coloring of \([1,9]\) using at most \(s\) colors.
		By assumption, it contains a bad \(3\)-AP. This \(3\)-AP lies entirely inside \([1,9]\),
		hence also inside \([1,n]\), so the original coloring \(\chi\) also contains a bad \(3\)-AP.
	\end{proof}
	
	The restriction method is used repeatedly below in the proofs of \(\mathrm{GW}(4;3,3)=9\) and \(\mathrm{GW}(5;3,3)=5\), etc.
	The key point is that the actual number of colors \(s\) used by the restricted coloring may be much smaller than \(r\),
	so we only need to know that ``every coloring of \([1,9]\) using at most \(s\) colors contains a bad \(3\)-AP'' for smaller \(s\).
	
	\section{Exact Values for Small Numbers of Colors}
	
	This section successively proves \(\mathrm{GW}(4;3,3)=9\) through \(\mathrm{GW}(8;3,3)=8\).
	Each proof is divided into a lower bound and an upper bound: the lower bound is usually given by a computer counterexample,
	while the upper bound consists of three parts: ``direct argument for small \(n\) + computer verification for intermediate \(n\) + restriction to \([1,9]\) for large \(n\).''
	
	\subsection{\(\mathrm{GW}(4;3,3)=9\)}
	
	\begin{theorem}\label{thm:gw4}
		\(\mathrm{GW}(4;3,3)=9\).
	\end{theorem}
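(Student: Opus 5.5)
We split the statement into a lower bound \(\mathrm{GW}(4;3,3)\ge 9\) and an upper bound \(\mathrm{GW}(4;3,3)\le 9\), following the scheme announced at the start of this section.

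\textbf{Lower bound.} By Definition~\ref{def:gw}, it suffices to exhibit one \(n\ge 4\) with \(n<9\) that admits an exact \(4\)-coloring of \([1,n]\) with no bad \(3\)-AP. Then no \(N\le 8\) can satisfy the defining property. Take \(n=8\) and the coloring \((0,1,1,2,1,2,2,3)\) from Lemma~\ref{lem:machine}(1). It uses all four colors, so it is exact.

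A hand check is also easy. The color classes are \(\{1\}\), \(\{2,3,5\}\), \(\{4,6,7\}\) and \(\{8\}\); neither three-element class is an AP, so no \(3\)-AP is monochromatic. For the rainbow case, note that a rainbow \(3\)-AP would need points from three different classes. One can list the \(3\)-APs in \([1,8]\) containing \(1\) or \(8\) and observe that each has two entries from the same class. For example, \((1,2,3)\), \((1,3,5)\), \((1,4,7)\), \((2,5,8)\), \((4,6,8)\) and \((6,7,8)\) all repeat a class, and the APs avoiding \(1\) and \(8\) use only two colors. Hence \(\mathrm{GW}(4;3,3)\ge 9\).

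\textbf{Upper bound.} We must show that for every \(n\ge 9\), every exact \(4\)-coloring \(\chi\) of \([1,n]\) contains a bad \(3\)-AP.
\begin{itemize}
\item \emph{Case \(n=9\).} This is exactly the instance \((9,4)\) of Lemma~\ref{lem:machine}(2), so it is settled by computer verification.
\item \emph{Case \(n\ge 10\).} Restrict \(\chi\) to the anchor interval \([1,9]\) and let \(s\le 4\) be the number of colors actually used there.
\begin{itemize}
\item If \(s=1\), the AP \((1,2,3)\) is monochromatic.
\item If \(s=2\), Lemma~\ref{lem:w23} gives a monochromatic \(3\)-AP.
\item If \(s=3\), Lemma~\ref{lem:morw31} gives a bad \(3\)-AP.
\item If \(s=4\), the restriction is an exact \(4\)-coloring of \([1,9]\), which is again covered by the \((9,4)\) verification.
\end{itemize}
In every case the bad \(3\)-AP lies in \([1,9]\), so by Lemma~\ref{lem:restrict} (or Lemma~\ref{lem:inherit}) it is also a bad \(3\)-AP of \(\chi\).
\end{itemize}

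\textbf{Main obstacle.} The only real difficulty is the case \(s=4\), equivalently the statement that every exact \(4\)-coloring of \([1,9]\) has a bad \(3\)-AP. No smaller-color result covers it, and I would rely on the exhaustive search rather than try a hand proof. One point needs care here: the restriction may use fewer colors than \(\chi\). This is why the case split on \(s\) is needed, and why the exact-coloring convention of the computer check is enough.
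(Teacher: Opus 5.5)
Your proposal is correct and follows the paper's proof essentially verbatim. The lower bound uses the same exact \(4\)-coloring \((0,1,1,2,1,2,2,3)\) of \([1,8]\), and the upper bound restricts to \([1,9]\) and splits on the number \(s\le 4\) of colors actually used, with \(s=4\) settled by the \((9,4)\) exhaustive search. The remaining differences are cosmetic: your color-class check of the counterexample (which is complete, since the six APs you list are exactly those through \(1\) or \(8\)), the redundant separate case \(n=9\), and citing Lemma~\ref{lem:morw31} rather than Theorem~\ref{thm:morw33} for \(s=3\).
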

	
	\begin{proof}
		\emph{Lower bound.}
		By Lemma~\ref{lem:machine}(1), there exists an exact \(4\)-coloring of \([1,8]\),
		\[
		\chi=(0,1,1,2,1,2,2,3),
		\]
		that contains no bad \(3\)-AP. This coloring is
		\[
		R\,B\,B\,G\,B\,G\,G\,Y.
		\]
		The \(12\) \(3\)-APs in \([1,8]\) are
		\[
		\begin{aligned}
			&(1,2,3),(2,3,4),(3,4,5),(4,5,6),(5,6,7),(6,7,8),\\
			&(1,3,5),(2,4,6),(3,5,7),(4,6,8),\\
			&(1,4,7),(2,5,8).
		\end{aligned}
		\]
		Substituting the colors \((0,1,1,2,1,2,2,3)\) term by term, we see that
		each triple is neither all equal nor pairwise distinct, hence neither monochromatic nor rainbow.
		Therefore \(\mathrm{GW}(4;3,3)>8\), that is,
		\[
		\mathrm{GW}(4;3,3)\ge 9.
		\]
		
		\emph{Upper bound.}
		Take any \(n\ge 9\), and let \(\chi:[1,n]\to[4]\) be an exact \(4\)-coloring.
		Restrict it to \([1,9]\) to obtain \(\chi|_{[1,9]}\), and let \(s\le 4\) be the actual number of colors used.
		\begin{itemize}
			\item If \(s=1\), it clearly contains a monochromatic \(3\)-AP;
			\item If \(s=2\), by Lemma~\ref{lem:w23} \cite{vdW}, every \(2\)-coloring of \([1,9]\) contains a monochromatic \(3\)-AP;
			\item If \(s=3\), by Theorem~\ref{thm:morw33} \cite{MORW}, \(\mathrm{GW}(3;3,3)=9\),
			so every exact \(3\)-coloring of \([1,9]\) contains a bad \(3\)-AP;
			\item If \(s=4\), by Lemma~\ref{lem:machine}(2), the instance \((9,4)\) has no exact counterexample,
			so every exact \(4\)-coloring of \([1,9]\) contains a bad \(3\)-AP.
		\end{itemize}
		Therefore \(\chi|_{[1,9]}\) must contain a bad \(3\)-AP, and this \(3\)-AP also lies in \(\chi\).
		Thus every exact \(4\)-coloring of \([1,n]\) with \(n\ge 9\) contains a bad \(3\)-AP, that is,
		\[
		\mathrm{GW}(4;3,3)\le 9.
		\]
		Combining the lower and upper bounds gives \(\mathrm{GW}(4;3,3)=9\).
	\end{proof}
	
	\begin{remark}[Proof structure]
		This proof is the template for all subsequent proofs for small numbers of colors:
		the lower bound is given by the computer counterexample on \([1,8]\);
		the upper bound restricts \(n\ge 9\) to \([1,9]\) and then performs a case analysis according to the actual number of colors \(s\le 4\).
		The case \(s=4\) is exactly the key fact provided by the computer verification for \((9,4)\).
	\end{remark}
	
	\subsection{\(\mathrm{GW}(5;3,3)=5\)}
	
	\begin{theorem}\label{thm:gw5}
		\(\mathrm{GW}(5;3,3)=5\).
	\end{theorem}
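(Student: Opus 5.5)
The plan follows the template of Theorem~\ref{thm:gw4}, with a lower bound and an upper bound split into three ranges of \(n\). The lower bound \(\mathrm{GW}(5;3,3)\ge 5\) is immediate from Definition~\ref{def:gw} and the convention of Remark~\ref{rem:nr}, since we require \(N\ge r=5\). No counterexample is needed here; the content is entirely in the upper bound. We must show that every exact \(5\)-coloring of \([1,n]\) contains a bad \(3\)-AP for every \(n\ge 5\).

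\emph{Small and intermediate \(n\).}
\begin{itemize}
\item For \(n=5\), an exact \(5\)-coloring assigns pairwise distinct colors to the five positions, so \((1,2,3)\) is a rainbow \(3\)-AP.
\item For \(n=6,7,8\), invoke Lemma~\ref{lem:machine}(2): the instances \((6,5)\), \((7,5)\), \((8,5)\) have no exact counterexample.
\end{itemize}

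\emph{Large \(n\ge 9\).} Let \(\chi:[1,n]\to[5]\) be exact and restrict it to \([1,9]\). Let \(s\le 5\) be the number of colors actually used by \(\chi|_{[1,9]}\). By Lemma~\ref{lem:restrict}, it suffices to show that every coloring of \([1,9]\) using exactly \(s\) colors contains a bad \(3\)-AP, for each \(s\in\{1,\dots,5\}\). We treat the cases as follows.
\begin{itemize}
\item \(s=1\): the coloring is monochromatic, so any \(3\)-AP is monochromatic.
\item \(s=2\): use Lemma~\ref{lem:w23}.
\item \(s=3\): use Theorem~\ref{thm:morw33}. An exact \(3\)-coloring of \([1,9]\) is covered because \(n=9\ge\mathrm{GW}(3;3,3)\); alternatively, use Lemma~\ref{lem:morw31} directly.
\item \(s=4\): the restriction is an exact \(4\)-coloring of \([1,9]\), and Theorem~\ref{thm:gw4} (or directly the instance \((9,4)\) of Lemma~\ref{lem:machine}(2)) gives a bad \(3\)-AP.
\item \(s=5\): use the instance \((9,5)\) of Lemma~\ref{lem:machine}(2).
\end{itemize}
In every case the bad \(3\)-AP in \([1,9]\) is also a bad \(3\)-AP of \(\chi\), which gives \(\mathrm{GW}(5;3,3)\le 5\).

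The only delicate point is bookkeeping, not mathematics. We must make sure each value of \(s\) is covered by a statement about colorings using exactly \(s\) colors of \([1,9]\), as in Remark~\ref{rem:exact-vs-atmost}. We must also note that the definition of \(\mathrm{GW}\) requires the property for \emph{all} \(n\ge N\), so the intermediate lengths \(6,7,8\) genuinely need the computer facts. This is unlike \(r=4\), where the counterexample at \(n=8\) shows that these lengths fail. All substantive difficulty is offloaded to the finite verifications in Lemma~\ref{lem:machine}.
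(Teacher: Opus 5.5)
Your proposal is correct and is essentially the paper's proof. The lower bound comes from the convention \(N\ge r\), the case \(n=5\) from the rainbow progression \((1,2,3)\), the cases \(n=6,7,8\) from the instances \((6,5),(7,5),(8,5)\) of Lemma~\ref{lem:machine}(2), and \(n\ge 9\) from restricting to \([1,9]\) and splitting on the number \(s\) of colors used, with \((9,5)\) handling \(s=5\); the only difference is cosmetic, since you list \(s=1,2,3,4\) separately (via \(w(2;3)=9\), \(\mathrm{GW}(3;3,3)=9\), and Theorem~\ref{thm:gw4}) where the paper bundles them as ``Theorem~\ref{thm:gw4} and known results for smaller numbers of colors.''
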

	
	\begin{proof}
		By Definition~\ref{def:gw}, \(\mathrm{GW}(5;3,3)\ge 5\).
		
		Take any \(n\ge 5\), and let \(\chi:[1,n]\to[5]\) be an exact \(5\)-coloring.
		\begin{itemize}
			\item \(n=5\): an exact \(5\)-coloring assigns pairwise distinct colors to the five positions of \([1,5]\),
			so \((1,2,3)\) is a rainbow \(3\)-AP.
			\item \(n=6,7,8\): by Lemma~\ref{lem:machine}(2), the instances
			\((6,5),(7,5),(8,5)\) have no exact counterexample,
			so every exact \(5\)-coloring of \([1,n]\) contains a bad \(3\)-AP.
			\item \(n\ge 9\): restrict to \([1,9]\), and let \(s\le 5\) be the actual number of colors used.
			If \(s\le 4\), then by Theorem~\ref{thm:gw4} and known results for smaller numbers of colors,
			every exact \(s\)-coloring of \([1,9]\) contains a bad \(3\)-AP;
			if \(s=5\), then by Lemma~\ref{lem:machine}(2), the instance \((9,5)\) has no exact counterexample,
			so every exact \(5\)-coloring of \([1,9]\) contains a bad \(3\)-AP.
			Therefore \(\chi|_{[1,9]}\) contains a bad \(3\)-AP, and hence \(\chi\) contains a bad \(3\)-AP.
		\end{itemize}
		Thus every exact \(5\)-coloring of \([1,n]\) with \(n\ge 5\) contains a bad \(3\)-AP, that is,
		\[
		\mathrm{GW}(5;3,3)\le 5.
		\]
		Combining with the lower bound gives \(\mathrm{GW}(5;3,3)=5\).
	\end{proof}
	
	\begin{remark}[Why computer verification for \(n=6,7,8\) is needed]
		The restriction method can only extend the conclusion from \(n=9\) to \(n\ge 9\), and cannot cover \(n=6,7,8\).
		These three lengths are smaller than the anchor interval and must be verified separately. Fortunately, the total numbers of colorings are small:
		\(5^6=15\,625\), \(5^7=78\,125\), \(5^8=390\,625\), so exhaustive search is very fast.
	\end{remark}
	
	\subsection{\(\mathrm{GW}(6;3,3)=6\)}
	
	\begin{theorem}\label{thm:gw6}
		\(\mathrm{GW}(6;3,3)=6\).
	\end{theorem}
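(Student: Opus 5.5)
The plan mirrors the proof of Theorem~\ref{thm:gw5}. For the lower bound, Definition~\ref{def:gw} (with the convention of Remark~\ref{rem:nr}) gives \(\mathrm{GW}(6;3,3)\ge 6\) immediately, so no counterexample construction is needed. For the upper bound, I will fix \(n\ge 6\) and an exact \(6\)-coloring \(\chi:[1,n]\to[6]\), and split the argument according to the length \(n\).

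For \(n=6\), exactness forces the six positions of \([1,6]\) to receive pairwise distinct colors, so \((1,2,3)\) is a rainbow \(3\)-AP. For \(n=7\) and \(n=8\), I will cite Lemma~\ref{lem:machine}(2): the instances \((7,6)\) and \((8,6)\) have no exact counterexample, so every exact \(6\)-coloring of \([1,7]\) or \([1,8]\) contains a bad \(3\)-AP.

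For \(n\ge 9\), I will restrict \(\chi\) to the anchor interval \([1,9]\) and let \(s\le 6\) be the number of colors actually used by \(\chi|_{[1,9]}\). I then split by \(s\):
\begin{itemize}
\item \(s=1\) gives a monochromatic \(3\)-AP trivially;
\item \(s=2\) is covered by Lemma~\ref{lem:w23};
\item \(s=3\) is covered by Theorem~\ref{thm:morw33};
\item \(s=4\) is covered by Theorem~\ref{thm:gw4};
\item \(s=5\) is covered by Theorem~\ref{thm:gw5};
\item \(s=6\) is covered by the instance \((9,6)\) of Lemma~\ref{lem:machine}(2).
\end{itemize}
The only subtlety is that for \(s\in\{3,4,5\}\) the restricted coloring is an exact \(s\)-coloring of \([1,9]\). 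Since \(9\ge \mathrm{GW}(s;3,3)\) in each of these cases, Definition~\ref{def:gw} applies directly with \(n=9\). Once every case yields a bad \(3\)-AP in \(\chi|_{[1,9]}\), Lemma~\ref{lem:restrict} (equivalently Lemma~\ref{lem:inherit}) transfers it to \(\chi\).

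Combining the cases gives \(\mathrm{GW}(6;3,3)\le 6\), and hence equality. There is no real obstacle here. The only step needing care is the case \(s=5\) (and similarly \(s=4\)), where one must check that the restricted coloring, viewed as a coloring into the colors it actually uses, is exact, so that the earlier theorems apply. This is immediate from the definition of \(s\).
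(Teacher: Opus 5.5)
Your proposal is correct and follows the paper's proof essentially step for step. The lower bound comes from the convention \(N\ge r\); \(n=6\) is handled by the distinct colors on \((1,2,3)\); \(n=7,8\) by the computer instances \((7,6)\) and \((8,6)\); and \(n\ge 9\) by restricting to \([1,9]\) and splitting on the number \(s\le 6\) of colors used. There \(s\le 5\) is covered by \(w(2;3)=9\), \(\mathrm{GW}(3;3,3)=9\), and Theorems~\ref{thm:gw4}--\ref{thm:gw5}, and \(s=6\) by the instance \((9,6)\). Your remark about relabeling the restricted coloring as an exact \(s\)-coloring just spells out what the paper leaves implicit.
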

	
	\begin{proof}
		By Definition~\ref{def:gw}, \(\mathrm{GW}(6;3,3)\ge 6\).
		
		Take any \(n\ge 6\), and let \(\chi:[1,n]\to[6]\) be an exact \(6\)-coloring.
		\begin{itemize}
			\item \(n=6\): \((1,2,3)\) is a rainbow \(3\)-AP;
			\item \(n=7,8\): by Lemma~\ref{lem:machine}(2), the instances \((7,6),(8,6)\) have no exact counterexample,
			so every exact \(6\)-coloring of \([1,n]\) contains a bad \(3\)-AP;
			\item \(n\ge 9\): restrict to \([1,9]\), and let \(s\le 6\) be the actual number of colors used.
			If \(s\le 5\), then by Theorem~\ref{thm:gw4}, Theorem~\ref{thm:gw5}, and known results for smaller numbers of colors,
			every exact \(s\)-coloring of \([1,9]\) contains a bad \(3\)-AP;
			if \(s=6\), then by Lemma~\ref{lem:machine}(2), the instance \((9,6)\) has no exact counterexample,
			so every exact \(6\)-coloring of \([1,9]\) contains a bad \(3\)-AP.
			Therefore \(\chi|_{[1,9]}\) contains a bad \(3\)-AP, and hence \(\chi\) contains a bad \(3\)-AP.
		\end{itemize}
		Thus \(\mathrm{GW}(6;3,3)\le 6\). Combining with the lower bound gives \(\mathrm{GW}(6;3,3)=6\).
	\end{proof}
	
	\subsection{\(\mathrm{GW}(7;3,3)=7\)}
	
	\begin{theorem}\label{thm:gw7}
		\(\mathrm{GW}(7;3,3)=7\).
	\end{theorem}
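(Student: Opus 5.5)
The proof follows the same template as Theorems~\ref{thm:gw5} and~\ref{thm:gw6}, with the lower bound taken from the definition and the upper bound split by interval length. First, by Definition~\ref{def:gw} and Remark~\ref{rem:nr}, \(\mathrm{GW}(7;3,3)\ge 7\). For the upper bound I fix \(n\ge 7\) and an exact \(7\)-coloring \(\chi:[1,n]\to[7]\), and I show that \(\chi\) contains a bad \(3\)-AP by considering three ranges of \(n\).

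\emph{Case \(n=7\).} An exact \(7\)-coloring of \([1,7]\) must give all seven positions distinct colors. Hence \((1,2,3)\) is a rainbow \(3\)-AP.

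\emph{Case \(n=8\).} Lemma~\ref{lem:machine}(2) states that the instance \((8,7)\) has no exact counterexample. This is the only length strictly between \(r\) and the anchor length \(9\), so it is the only place where a direct computer check at length below \(9\) is needed.

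\emph{Case \(n\ge 9\).} I restrict \(\chi\) to \([1,9]\) and let \(s\le 7\) be the number of colors actually used by \(\chi|_{[1,9]}\). By Lemma~\ref{lem:restrict}, it suffices to know that every coloring of \([1,9]\) using exactly \(s\) colors contains a bad \(3\)-AP, for each \(s\in\{1,\dots,7\}\). I handle each value of \(s\) as follows:
\begin{itemize}
\item \(s=1\): the coloring is constant, so it is trivially monochromatic.
\item \(s=2\): this is Lemma~\ref{lem:w23}.
\item \(s=3\): this is Theorem~\ref{thm:morw33}.
\item \(s=4\): this is Theorem~\ref{thm:gw4}, or directly the instance \((9,4)\).
\item \(s=5\): this is Theorem~\ref{thm:gw5}, applied with \(n=9\).
\item \(s=6\): this is Theorem~\ref{thm:gw6}, applied with \(n=9\).
\item \(s=7\): this is the instance \((9,7)\) of Lemma~\ref{lem:machine}(2).
\end{itemize}
In every case \(\chi|_{[1,9]}\), and hence \(\chi\), contains a bad \(3\)-AP. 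Combining the three cases gives \(\mathrm{GW}(7;3,3)\le 7\), and with the lower bound, equality.

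The only subtle step is the bookkeeping between ``exact \(s\)-coloring'' and the hypothesis ``at most \(s\) colors'' in Lemma~\ref{lem:restrict}. I resolve it by casing on the exact number \(s\) of colors used on \([1,9]\), as in Remark~\ref{rem:exact-vs-atmost}. Each earlier theorem \(\mathrm{GW}(s;3,3)\le 9\) then applies at \(n=9\) to an exact \(s\)-coloring. Nothing else requires new work; all the substance lies in the finite verifications \((8,7)\) and \((9,7)\).
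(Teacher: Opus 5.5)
Your proposal is correct and follows essentially the same route as the paper. Both take the lower bound from Definition~\ref{def:gw}, use the rainbow $(1,2,3)$ for $n=7$ and the instance $(8,7)$ for $n=8$, and for $n\ge 9$ restrict to $[1,9]$ and split on the number $s\le 7$ of colors used, citing earlier results for $s\le 6$ and the instance $(9,7)$ for $s=7$; your case-by-case list for $s=1,\dots,6$ just spells out what the paper compresses into ``Theorems~\ref{thm:gw4}--\ref{thm:gw6} and known results for smaller numbers of colors.''
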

	
	\begin{proof}
		By Definition~\ref{def:gw}, \(\mathrm{GW}(7;3,3)\ge 7\).
		
		Take any \(n\ge 7\), and let \(\chi:[1,n]\to[7]\) be an exact \(7\)-coloring.
		\begin{itemize}
			\item \(n=7\): \((1,2,3)\) is a rainbow \(3\)-AP;
			\item \(n=8\): by Lemma~\ref{lem:machine}(2), the instance \((8,7)\) has no exact counterexample,
			so every exact \(7\)-coloring of \([1,8]\) contains a bad \(3\)-AP;
			\item \(n\ge 9\): restrict to \([1,9]\), and let \(s\le 7\) be the actual number of colors used.
			If \(s\le 6\), then by Theorems~\ref{thm:gw4}--\ref{thm:gw6} and known results for smaller numbers of colors,
			every exact \(s\)-coloring of \([1,9]\) contains a bad \(3\)-AP;
			if \(s=7\), then by Lemma~\ref{lem:machine}(2), the instance \((9,7)\) has no exact counterexample,
			so every exact \(7\)-coloring of \([1,9]\) contains a bad \(3\)-AP.
			Therefore \(\chi|_{[1,9]}\) contains a bad \(3\)-AP, and hence \(\chi\) contains a bad \(3\)-AP.
		\end{itemize}
		Thus \(\mathrm{GW}(7;3,3)\le 7\). Combining with the lower bound gives \(\mathrm{GW}(7;3,3)=7\).
	\end{proof}
	
	\subsection{\(\mathrm{GW}(8;3,3)=8\)}
	
	\begin{theorem}\label{thm:gw8}
		\(\mathrm{GW}(8;3,3)=8\).
	\end{theorem}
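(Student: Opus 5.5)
The plan follows the template of Theorems~\ref{thm:gw5}--\ref{thm:gw7}. The lower bound \(\mathrm{GW}(8;3,3)\ge 8\) is immediate from Definition~\ref{def:gw} and the convention of Remark~\ref{rem:nr}. For the upper bound, I fix \(n\ge 8\) and an exact \(8\)-coloring \(\chi:[1,n]\to[8]\), and split by the length \(n\).

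\emph{Case \(n=8\).} An exact \(8\)-coloring of \([1,8]\) assigns pairwise distinct colors to all eight positions. Hence \((1,2,3)\) is a rainbow \(3\)-AP, and no computer verification is needed. This is why, unlike the previous theorems, there is no intermediate range \(r<n<9\).

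\emph{Case \(n\ge 9\).} I restrict \(\chi\) to the anchor interval \([1,9]\) and let \(s\le 8\) be the number of colors actually used there. Since \(s\) counts all colors present, \(\chi|_{[1,9]}\) is an exact \(s\)-coloring of \([1,9]\). The subcases are as follows.
\begin{itemize}
\item If \(s=1\), \((1,2,3)\) is monochromatic.
\item If \(s=2\), Lemma~\ref{lem:w23} gives a monochromatic \(3\)-AP.
\item If \(s=3\), Theorem~\ref{thm:morw33} applies at length \(9\).
\item If \(s\in\{4,5,6,7\}\), Theorems~\ref{thm:gw4}, \ref{thm:gw5}, \ref{thm:gw6}, \ref{thm:gw7} apply. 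Each asserts that every exact \(s\)-coloring of \([1,n']\) with \(n'\ge \mathrm{GW}(s;3,3)\) contains a bad \(3\)-AP, and in each case \(9\ge \mathrm{GW}(s;3,3)\).
\item If \(s=8\), the \((9,8)\) instance of Lemma~\ref{lem:machine}(2) applies.
\end{itemize}
In every subcase \(\chi|_{[1,9]}\) contains a bad \(3\)-AP. By Lemma~\ref{lem:restrict} (equivalently Lemma~\ref{lem:inherit}), this \(3\)-AP is also a bad \(3\)-AP of \(\chi\).

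Combining both cases, every exact \(8\)-coloring of \([1,n]\) with \(n\ge 8\) contains a bad \(3\)-AP, so \(\mathrm{GW}(8;3,3)\le 8\) and equality follows. The step needing the most care is the bookkeeping in the \(n\ge 9\) case. One must check that each earlier result is invoked at length exactly \(9\), which lies in its valid range, and with the exact-coloring convention matching the actual color count \(s\), as discussed in Remark~\ref{rem:exact-vs-atmost}. The only genuinely nontrivial input is the \((9,8)\) exhaustive check, which is assumed.
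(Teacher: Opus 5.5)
Your proposal is correct and follows the paper's proof essentially step for step: the lower bound comes from Definition~\ref{def:gw}, the case \(n=8\) uses the rainbow progression \((1,2,3)\), and the case \(n\ge 9\) restricts to \([1,9]\), handling \(s\le 7\) by the earlier theorems and \(s=8\) by the \((9,8)\) instance of Lemma~\ref{lem:machine}. As a side remark, that last instance needs no computer: in an exact \(8\)-coloring of \([1,9]\) exactly one color occurs twice, so avoiding a rainbow \(3\)-AP would force both of the disjoint progressions \((1,2,3)\) and \((7,8,9)\) to contain that repeated pair, which is impossible.
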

	
	\begin{proof}
		By Definition~\ref{def:gw}, \(\mathrm{GW}(8;3,3)\ge 8\).
		
		Take any \(n\ge 8\), and let \(\chi:[1,n]\to[8]\) be an exact \(8\)-coloring.
		\begin{itemize}
			\item \(n=8\): \((1,2,3)\) is a rainbow \(3\)-AP;
			\item \(n\ge 9\): restrict to \([1,9]\), and let \(s\le 8\) be the actual number of colors used.
			If \(s\le 7\), then by Theorems~\ref{thm:gw4}--\ref{thm:gw7} and known results for smaller numbers of colors,
			every exact \(s\)-coloring of \([1,9]\) contains a bad \(3\)-AP;
			if \(s=8\), then by Lemma~\ref{lem:machine}(2), the instance \((9,8)\) has no exact counterexample,
			so every exact \(8\)-coloring of \([1,9]\) contains a bad \(3\)-AP.
			Therefore \(\chi|_{[1,9]}\) contains a bad \(3\)-AP, and hence \(\chi\) contains a bad \(3\)-AP.
		\end{itemize}
		Thus \(\mathrm{GW}(8;3,3)\le 8\). Combining with the lower bound gives \(\mathrm{GW}(8;3,3)=8\).
	\end{proof}
	
	\begin{remark}
		Note that the proof of \(\mathrm{GW}(8;3,3)=8\) does not require computer verification for \(n=8\), because an exact \(8\)-coloring of \([1,8]\) assigns pairwise distinct colors to the eight positions, and \((1,2,3)\) directly gives a rainbow \(3\)-AP.
		In contrast, the proof of \(\mathrm{GW}(7;3,3)=7\) requires \((8,7)\), because an exact \(7\)-coloring of \([1,8]\) has one repeated color and cannot be handled directly by ``all colors distinct.''
	\end{remark}
	
	\section{Anchor Lemma and Pure Theoretical Proof for \(r\ge 9\)}
	
	\subsection{Anchor Lemma}
	
	Why choose \([1,9]\) as the anchor? There are two reasons:
	first, both \(w(2;3)=9\) \cite{vdW} and \(\mathrm{GW}(3;3,3)=9\) \cite{MORW} have threshold exactly \(9\), showing that \([1,9]\) is a ``just sufficient'' interval;
	second, for exact \(r\)-colorings with \(r\le 9\), the number of colors is already close to the number of positions, making the case analysis manageable.
	
	\begin{lemma}[Anchor lemma]\label{lem:anchor}
		Every coloring of \([1,9]\) using at most \(9\) colors contains a bad \(3\)-AP.
	\end{lemma}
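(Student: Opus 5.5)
The plan is a case split on the number \(s\) of colors that a given coloring \(\chi\) of \([1,9]\) actually uses. Since \(\chi\) uses at most \(9\) colors, \(1\le s\le 9\). In every case we find a bad \(3\)-AP inside \([1,9]\) by quoting a result already established for exactly \(s\) colors at length \(9\). No new computation is needed; the argument only collects earlier facts.

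The cases are handled in order of increasing \(s\).
\begin{enumerate}[label=(\roman*)]
	\item If \(s=1\), the triple \((1,2,3)\) is monochromatic.
	\item If \(s=2\), Lemma~\ref{lem:w23} gives a monochromatic \(3\)-AP.
	\item If \(s=3\), \(\chi\) is an exact \(3\)-coloring of \([1,9]\), and Theorem~\ref{thm:morw33} (or directly Lemma~\ref{lem:morw31}) gives a bad \(3\)-AP.
	\item For \(4\le s\le 8\), we view \(\chi\) as an exact \(s\)-coloring of \([1,n]\) with \(n=9\). Theorems~\ref{thm:gw4}, \ref{thm:gw5}, \ref{thm:gw6}, \ref{thm:gw7} and \ref{thm:gw8} show \(\mathrm{GW}(s;3,3)\le 9\). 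By Definition~\ref{def:gw}, every exact \(s\)-coloring of \([1,n]\) for \(n\ge \mathrm{GW}(s;3,3)\) contains a bad \(3\)-AP, and \(n=9\) qualifies. Equivalently, one can cite the instances \((9,s)\) of Lemma~\ref{lem:machine}(2) directly.
	\item If \(s=9\), all nine positions receive distinct colors, so \((1,2,3)\) is rainbow.
\end{enumerate}

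Before running the cases, I would make the relabeling explicit. A coloring using exactly \(s\) colors from some larger palette can be renamed to an exact \(s\)-coloring into \([s]\). Renaming colors preserves both "all equal" and "pairwise distinct" on each triple, so the earlier theorems apply.

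The main point of care is logical rather than computational. I must check that, for each \(s\), the result I quote covers length exactly \(9\) and not only lengths \(n\ge\) some larger threshold. This holds because every value \(\mathrm{GW}(s;3,3)\) for \(s\le 8\) is at most \(9\). I also need to avoid circularity: none of Theorems~\ref{thm:gw4}--\ref{thm:gw8} invoked the present lemma, since their \(n\ge9\) cases used only smaller \(s\) and the machine facts. That ordering should be stated explicitly when concluding.
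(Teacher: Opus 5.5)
Your proof is correct and follows the paper's argument essentially verbatim. Both split on the actual number \(s\le 9\) of colors used. For \(s\le 8\) both quote \(w(2;3)=9\), \(\mathrm{GW}(3;3,3)=9\), and Theorems~\ref{thm:gw4}--\ref{thm:gw8}, and for \(s=9\) both use the rainbow triple \((1,2,3)\). Your added remarks on relabeling colors into \([s]\) and on non-circularity are sound, and they make explicit what the paper states only in its roadmap.
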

	
	\begin{proof}
		Let \(s\le 9\) be the actual number of colors used. We perform a case analysis according to \(s\).
		\begin{itemize}
			\item \(s=1\): it clearly contains a monochromatic \(3\)-AP;
			\item \(s=2\): by Lemma~\ref{lem:w23} \cite{vdW}, \(w(2;3)=9\);
			\item \(s=3\): by Theorem~\ref{thm:morw33} \cite{MORW}, \(\mathrm{GW}(3;3,3)=9\);
			\item \(s=4\): by Theorem~\ref{thm:gw4}, \(\mathrm{GW}(4;3,3)=9\);
			\item \(s=5\): by Theorem~\ref{thm:gw5}, \(\mathrm{GW}(5;3,3)=5\le 9\);
			\item \(s=6\): by Theorem~\ref{thm:gw6}, \(\mathrm{GW}(6;3,3)=6\le 9\);
			\item \(s=7\): by Theorem~\ref{thm:gw7}, \(\mathrm{GW}(7;3,3)=7\le 9\);
			\item \(s=8\): by Theorem~\ref{thm:gw8}, \(\mathrm{GW}(8;3,3)=8\le 9\);
			\item \(s=9\): an exact \(9\)-coloring of \([1,9]\) assigns pairwise distinct colors to the nine positions,
			so \((1,2,3)\) is a rainbow \(3\)-AP.
		\end{itemize}
		Therefore, every coloring using at most \(9\) colors contains a bad \(3\)-AP.
	\end{proof}
	
	\begin{remark}
		The anchor lemma is purely theoretical: it cites only the known results \(w(2;3)=9\) \cite{vdW}, \(\mathrm{GW}(3;3,3)=9\) \cite{MORW}, \(\mathrm{GW}(4;3,3)=9\), and Theorems~\ref{thm:gw5}--\ref{thm:gw8} proved above.
		It does not itself require new computer verification. This is one of the key differences between our approach and MORW's local forcing argument \cite{MORW}.
	\end{remark}
	
	\subsection{Exact Values for \(r\ge 9\)}
	
	\begin{theorem}\label{thm:gwge9}
		For every integer \(r\ge 9\),
		\[
		\mathrm{GW}(r;3,3)=r.
		\]
	\end{theorem}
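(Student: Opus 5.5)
The plan is to split the proof into a lower bound and an upper bound, and for the upper bound to split on the length \(n\) of the interval. The lower bound \(\mathrm{GW}(r;3,3)\ge r\) is immediate from Definition~\ref{def:gw} and Remark~\ref{rem:nr}. For the upper bound, fix \(r\ge 9\), take \(n\ge r\), and let \(\chi:[1,n]\to[r]\) be an exact \(r\)-coloring. The goal is to show that \(\chi\) contains a bad \(3\)-AP. Since \(n\ge r\ge 9\), the interval \([1,9]\) lies inside \([1,n]\), so \(\chi|_{[1,9]}\) is defined and uses at most \(9\) colors.

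The key step is to apply the anchor lemma, Lemma~\ref{lem:anchor}. It gives that \(\chi|_{[1,9]}\) contains a bad \(3\)-AP. By Lemma~\ref{lem:restrict}, used with \(s=9\le r\), or equivalently by the contrapositive of Lemma~\ref{lem:inherit}, this \(3\)-AP is also a bad \(3\)-AP of \(\chi\). Hence every exact \(r\)-coloring of \([1,n]\) with \(n\ge r\) contains a bad \(3\)-AP, so \(\mathrm{GW}(r;3,3)\le r\). Together with the lower bound this gives equality.

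I do not expect a genuine obstacle here, since all the combinatorial content sits in Lemma~\ref{lem:anchor}. The one point to check carefully is the hypothesis \(n\ge 9\) needed for the restriction. It holds automatically because \(n\ge r\ge 9\); this is exactly why, unlike for \(r\le 8\), no intermediate range \(r\le n\le 8\) is left to verify. The other point to make explicit is that the restricted coloring may use fewer than \(9\) colors, possibly fewer than \(r\). This is harmless, because the anchor lemma is stated for colorings using \emph{at most} \(9\) colors, not for exact ones.
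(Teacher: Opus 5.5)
Your proposal is correct and follows the paper's proof essentially step for step. The lower bound comes from the convention \(N\ge r\) in Definition~\ref{def:gw}. For the upper bound, you restrict an exact \(r\)-coloring of \([1,n]\), with \(n\ge r\ge 9\), to \([1,9]\), apply the anchor lemma (Lemma~\ref{lem:anchor}) to the restriction (which uses at most \(9\) colors), and observe that the resulting bad \(3\)-AP also lies in \([1,n]\).
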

	
	\begin{proof}
		By Definition~\ref{def:gw}, \(\mathrm{GW}(r;3,3)\ge r\).
		
		We prove the upper bound. Take any \(n\ge r\). Since \(r\ge 9\), we have \(n\ge 9\).
		Let \(\chi:[1,n]\to[r]\) be an exact \(r\)-coloring.
		Restrict it to \([1,9]\) to obtain \(\chi|_{[1,9]}\), and let \(s\le 9\) be the actual number of colors used.
		By the anchor lemma (Lemma~\ref{lem:anchor}), \(\chi|_{[1,9]}\) must contain a bad \(3\)-AP.
		This \(3\)-AP lies entirely inside \([1,9]\), hence also inside \([1,n]\),
		so the original coloring \(\chi\) also contains a bad \(3\)-AP.
		
		Thus, for every \(n\ge r\), every exact \(r\)-coloring contains a bad \(3\)-AP, that is,
		\[
		\mathrm{GW}(r;3,3)\le r.
		\]
		Combining with the lower bound gives \(\mathrm{GW}(r;3,3)=r\).
	\end{proof}
	
	\begin{remark}
		The key point of this proof is that after restricting to \([1,9]\), the number of colors used is at most \(9\), which falls exactly within the scope of the anchor lemma.
		If the anchor covered only up to \(8\) colors, then for \(r\ge 9\) the case \(s=9\) could not be handled;
		if the anchor covered up to \(10\) colors, then for \(r=9\) such a large anchor might not be needed.
		\([1,9]\) is exactly the minimal ``just enough'' anchor.
	\end{remark}
	
	\section{Main Theorem}
	
	\begin{theorem}\label{thm:main}
		For every integer \(r\ge 3\),
		\[
		\mathrm{GW}(r;3,3)=
		\begin{cases}
			9, & r=3,4,\\[2mm]
			r, & r\ge 5.
		\end{cases}
		\]
	\end{theorem}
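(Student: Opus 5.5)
The main theorem is an assembly of results already proved, so the plan is a case split on \(r\) with each case sent to the theorem that settles it.

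For \(r=3\), we cite Theorem~\ref{thm:morw33} (MORW Theorem 4.2). This gives \(\mathrm{GW}(3;3,3)=9\).

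For \(r=4\), we invoke Theorem~\ref{thm:gw4}. Its lower bound is the exact counterexample \((0,1,1,2,1,2,2,3)\) on \([1,8]\) from Lemma~\ref{lem:machine}(1). Its upper bound restricts to the anchor interval \([1,9]\) via Lemma~\ref{lem:restrict} and uses the \((9,4)\) verification.

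For \(5\le r\le 8\), we invoke Theorems~\ref{thm:gw5}, \ref{thm:gw6}, \ref{thm:gw7}, \ref{thm:gw8} in turn. Each gives \(\mathrm{GW}(r;3,3)=r\).

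For \(r\ge 9\), we invoke Theorem~\ref{thm:gwge9}, which rests on the anchor Lemma~\ref{lem:anchor}.

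The only genuine check is that the dependency chain is acyclic, so that no step silently uses the conclusion it is meant to prove. The order is as follows:
\begin{enumerate}[label=(\roman*)]
\item Theorem~\ref{thm:gw4} uses only Lemma~\ref{lem:w23}, Theorem~\ref{thm:morw33} and the \((9,4)\) instance of Lemma~\ref{lem:machine}.
\item Each Theorem~\ref{thm:gw5}--\ref{thm:gw8} uses only the earlier ones together with finite instances \((N,R)\) with \(N\le 9\).
\item The anchor lemma uses only these theorems plus the trivial rainbow triple \((1,2,3)\) when \(s=9\).
\end{enumerate}

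A point to check carefully is the passage from ``exact \(s\)-coloring'' to ``at most \(s\) colors'' inside the restriction arguments. A restricted coloring of \([1,9]\) that uses exactly \(s\) colors is an exact \(s\)-coloring of \([1,9]\). Since \(9\ge \mathrm{GW}(s;3,3)\) for every \(s\le 9\), the definition of \(\mathrm{GW}\) (which quantifies over all \(n\ge N\)) applies at \(n=9\). This is exactly the content of Remark~\ref{rem:exact-vs-atmost}.

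Once all cases are assembled, the displayed piecewise formula follows. The value is \(9\) for \(r\in\{3,4\}\), and \(r\) for \(r\ge 5\), which covers both the range \(5\le r\le 8\) and the range \(r\ge 9\).

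I expect no real obstacle, since the statement is a consolidation. The step most deserving of scrutiny is the lower bound for \(r=4\): one must confirm by hand that all twelve \(3\)-APs of \([1,8]\) under \((0,1,1,2,1,2,2,3)\) are neither monochromatic nor rainbow, so that \(\mathrm{GW}(4;3,3)\ge 9\) does not depend solely on the computer. For \(r\ge 5\) the lower bound is just the convention \(N\ge r\) of Remark~\ref{rem:nr}, so there the content lies entirely in the upper bounds already established.
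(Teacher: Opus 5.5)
Your proposal is correct and is essentially the paper's own proof: the paper also sends \(r=3\) to MORW Theorem 4.2, \(r=4\) to Theorem~\ref{thm:gw4}, \(5\le r\le 8\) to Theorems~\ref{thm:gw5}--\ref{thm:gw8}, and \(r\ge 9\) to Theorem~\ref{thm:gwge9}, and your acyclicity and exact-versus-at-most checks match Remark~\ref{rem:exact-vs-atmost}. One wording fix: invoke \(9\ge \mathrm{GW}(s;3,3)\) only for \(s\le 8\), because using it at \(s=9\) would be circular; the case \(s=9\) must come from the trivial rainbow triple \((1,2,3)\), as your item (iii) already says.
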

	
	\begin{proof}
		The case \(r=3\) follows from Theorem~\ref{thm:morw33} \cite{MORW};
		the case \(r=4\) follows from Theorem~\ref{thm:gw4};
		the cases \(r=5,6,7,8\) follow from Theorems~\ref{thm:gw5}--\ref{thm:gw8};
		the case \(r\ge 9\) follows from Theorem~\ref{thm:gwge9}.
	\end{proof}
	
	\begin{corollary}\label{cor:nonmono}
		The sequence \(r\mapsto \mathrm{GW}(r;3,3)\) is
		\[
		9,\ 9,\ 5,\ 6,\ 7,\ 8,\ 9,\ 10,\ 11,\dots,
		\]
		that is, it is non-monotone in \(r\); the plateau \(9\) at \(r=3,4\) is the only anomaly.
	\end{corollary}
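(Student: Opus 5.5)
The corollary is a direct reading of Theorem~\ref{thm:main}, so I will derive it in three short steps.

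First, I will list the values. Substituting \(r=3,4,5,\dots\) into Theorem~\ref{thm:main} gives
\[
\mathrm{GW}(3;3,3)=\mathrm{GW}(4;3,3)=9,\qquad \mathrm{GW}(r;3,3)=r \quad (r\ge 5).
\]
This produces the sequence \(9,9,5,6,7,8,9,10,11,\dots\).

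Second, I will establish non-monotonicity by exhibiting one explicit descent: \(\mathrm{GW}(4;3,3)=9>5=\mathrm{GW}(5;3,3)\). Hence the sequence is neither nondecreasing nor nonincreasing. It cannot be nonincreasing because, for example, \(\mathrm{GW}(5;3,3)=5<6=\mathrm{GW}(6;3,3)\). It cannot be nondecreasing because of the descent just given.

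Third, I will make precise the claim that ``the plateau \(9\) at \(r=3,4\) is the only anomaly.'' I read this as follows: for every \(r\ge 5\) one has \(\mathrm{GW}(r;3,3)=r\), and this tail is strictly increasing with increments of exactly \(1\). In addition, \(r=4\) is the only index where \(\mathrm{GW}(r+1;3,3)<\mathrm{GW}(r;3,3)\), and \(r=3,4\) are the only indices where \(\mathrm{GW}(r;3,3)\ne r\). Both facts follow by inspecting the formula: for \(r\ge 5\) consecutive terms differ by \(+1\); between \(r=3\) and \(r=4\) the difference is \(0\); and between \(r=4\) and \(r=5\) it is \(-4\).

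I expect no real obstacle here. The only delicate point is interpretive: ``only anomaly'' must be read in the sense just made precise, namely as \(\mathrm{GW}(r;3,3)\ne r\) exactly for \(r\in\{3,4\}\), together with the single descent at \(r=4\to 5\). The value \(9\) at \(r=9\) coincides numerically with the plateau but is not an anomaly, since there \(\mathrm{GW}(9;3,3)=9=r\).
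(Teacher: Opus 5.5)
Your proposal is correct and follows the same route as the paper, which gives no separate proof and treats the corollary as an immediate reading of Theorem~\ref{thm:main}; your explicit descent \(\mathrm{GW}(4;3,3)=9>5=\mathrm{GW}(5;3,3)\), together with the ascent \(5<6\), settles non-monotonicity. Your precise reading of ``only anomaly'' (namely \(\mathrm{GW}(r;3,3)\ne r\) exactly for \(r\in\{3,4\}\), with a single descent at \(r=4\to 5\)) is a faithful formalization of what the paper leaves informal.
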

	
	\begin{remark}[Intuitive explanation of non-monotonicity]
		One might usually think that the larger the number of colors \(r\), the harder it is to avoid a bad \(3\)-AP, so that \(\mathrm{GW}(r;3,3)\) should increase with \(r\).
		But this is not the case. When \(r=3,4\), the number of colors in an exact coloring is relatively small, which in fact allows longer colorings without a bad \(3\)-AP;
		when \(r\ge 5\), the exactness constraint forces more colors into a shorter interval, so the room to avoid rainbow/monochromatic \(3\)-APs becomes smaller,
		and \(\mathrm{GW}(r;3,3)\) falls back to the theoretical lower bound \(r\).
		Non-monotonicity is precisely a manifestation of the tension between the ``exactness constraint'' and the ``number of colors.''
	\end{remark}
	
	\section{Comparison with MORW Theorem 4.3}\label{sec:compare}
	
	This section explains the differences between our proof and the proof of MORW Theorem~\ref{thm:morw43} \cite{MORW} (Theorem 4.3 of the original paper),
	and clarifies the covering relation. To accurately reflect the four authors Mao, Ozeki, Robertson, and Wang \cite{MORW},
	we use the abbreviation MORW throughout.
	
	\subsection{The Proof Method of MORW Theorem 4.3: Local Forcing}
	
	MORW Theorem~\ref{thm:morw43} \cite{MORW} asserts that for \(r\ge 12\), \(\mathrm{GW}(r;3,3)=r\).
	Its proof uses a \emph{local forcing argument}, which can be summarized as follows.
	
	\begin{enumerate}[label=(\arabic*)]
		\item First prove that in the interval \([3,7]\) there must exist two adjacent integers \(a,a+1\) of the same color.
		Otherwise, the colors of \(3,4,5,6,7\) alternate, and together with the constraints of having no rainbow/monochromatic \(3\)-AP, this yields a contradiction.
		\item Suppose \(a,a+1\) have the same color, denoted \(i_1\). Since \(a\ge 3\), the integers \(a-2,a-1\) are positive.
		\item To avoid a monochromatic \(3\)-AP, \(\chi(a+2)\) must take a color different from \(i_1\), denoted \(i_2\).
		\item Then discuss cases according to whether \(\chi(a+3)\) equals \(i_2\), and successively force the colors of positions
		\(a+4,a+5,a+6,a-1,a-2\), etc., eventually deriving a contradiction.
		\item Key observation: the largest integer used in the argument is \(a+6\). Since \(a\le 6\),
		we have \(a+6\le 12\). Therefore, as long as \(n\ge 12\), a contradiction must occur.
	\end{enumerate}
	
	Thus a direct reading of MORW's proof gives: any coloring without a bad \(3\)-AP has length at most \(11\).
	Equivalently, for every \(r\) and every \(n\ge 12\), every exact \(r\)-coloring of \([1,n]\) contains a bad \(3\)-AP.
	Combining with the lower bound \(\mathrm{GW}(r;3,3)\ge r\), for \(r\ge 12\) this gives \(\mathrm{GW}(r;3,3)=r\).
	
	It should be noted in particular that the formal statement of MORW Theorem 4.3 \cite{MORW} concerns only \(r\ge 12\).
	However, a stronger upper bound can be read directly from its local argument:
	for every \(r\ge 1\),
	\[
	\mathrm{GW}(r;3,3)\le \max\{r,12\}.
	\]
	When \(r\le 12\), this gives \(\mathrm{GW}(r;3,3)\le 12\);
	when \(r\ge 12\), this gives \(\mathrm{GW}(r;3,3)\le r\).
	The comparison table below uses this formulation to avoid misquoting the formal statement of MORW Theorem 4.3.
	
	\subsection{Our Proof Method: Global Anchor and Restriction Method}
	
	Our proof does not use local forcing; instead, it establishes a \emph{global anchor fact} (Lemma~\ref{lem:anchor}):
	
	> Every coloring of \([1,9]\) using at most \(9\) colors contains a bad \(3\)-AP.
	
	This fact is obtained by classifying according to the actual number of colors \(s\le 9\):
	\begin{itemize}
		\item The cases \(s\le 8\) are covered by \(w(2;3)=9\) \cite{vdW}, \(\mathrm{GW}(3;3,3)=9\) \cite{MORW},
		\(\mathrm{GW}(4;3,3)=9\), and \(\mathrm{GW}(5;3,3)\) through \(\mathrm{GW}(8;3,3)\) verified by computer in this paper;
		\item The case \(s=9\) follows directly from ``all colors distinct, hence \((1,2,3)\) is a rainbow \(3\)-AP.''
	\end{itemize}
	
	With the anchor lemma in hand, we use the \emph{restriction method}: for every \(r\ge 9\) and every \(n\ge r\),
	restrict the exact \(r\)-coloring to \([1,9]\). The number of colors used is at most \(9\), so by the anchor lemma the restricted coloring contains a bad \(3\)-AP,
	and hence the original coloring also contains a bad \(3\)-AP. Therefore,
	\[
	\mathrm{GW}(r;3,3)\le r\qquad (r\ge 9).
	\]
	Combining with the lower bound gives Theorem~\ref{thm:gwge9}.
	
	Our threshold \(9\) comes from the length of the anchor interval \([1,9]\), not from the endpoint of a local forcing expansion.
	Furthermore, combining with computer verification for \(r=5,6,7,8\), we reduce the threshold to \(r\ge 5\) (Theorem~\ref{thm:main}).
	
	\subsection{Covering Relation: Our Conclusions Cover and Improve MORW Theorem 4.3}
	
	By Theorem~\ref{thm:main}, for every \(r\ge 5\), \(\mathrm{GW}(r;3,3)=r\).
	In particular, when \(r\ge 12\), our conclusion agrees completely with MORW Theorem~\ref{thm:morw43} \cite{MORW}.
	Therefore, \textbf{our main theorem directly implies MORW Theorem 4.3}:
	\[
	r\ge 12\ \Longrightarrow\ r\ge 9\ \Longrightarrow\ \mathrm{GW}(r;3,3)=r.
	\]
	
	More importantly, for \(r=4,5,\dots,11\), our conclusions give exact values that were not determined by MORW Theorem 4.3 \cite{MORW}.
	The detailed comparison is as follows. The column ``readable from MORW's local argument'' in the table
	refers to the upper bound \(\mathrm{GW}(r;3,3)\le\max\{r,12\}\) that can be read directly from the proof of MORW Theorem 4.3,
	\emph{not} the formal statement of MORW Theorem 4.3. The original MORW paper \cite{MORW} formally states only the case \(r\ge 12\);
	the upper bound for \(r<12\) is ``readable'' from its local argument, rather than a formal assertion of the MORW paper.
	
	\begin{table}[htbp]
		\centering
		\caption{Comparison of our conclusions with MORW Theorem 4.3 \cite{MORW} and its proof method.
			``Readable from MORW's local argument'' means that the bound can be read directly from MORW's proof method,
			not that it is the formal statement of MORW Theorem 4.3.}
		\label{tab:compare}
		\begin{tabularx}{\textwidth}{c X X c}
			\toprule
			\(r\) & What MORW's theorem or local argument gives & Our conclusion & Improvement?\\
			\midrule
			\(3\) & MORW Theorem 4.2: \(\mathrm{GW}(3;3,3)=9\) & \(\mathrm{GW}(3;3,3)=9\) & Same result cited\\
			\(4\) & Local argument gives \(\mathrm{GW}(4;3,3)\le 12\) & \(\mathrm{GW}(4;3,3)=9\) & Improved\\
			\(5\) & Local argument gives \(\mathrm{GW}(5;3,3)\le 12\) & \(\mathrm{GW}(5;3,3)=5\) & Improved\\
			\(6\) & Local argument gives \(\mathrm{GW}(6;3,3)\le 12\) & \(\mathrm{GW}(6;3,3)=6\) & Improved\\
			\(7\) & Local argument gives \(\mathrm{GW}(7;3,3)\le 12\) & \(\mathrm{GW}(7;3,3)=7\) & Improved\\
			\(8\) & Local argument gives \(\mathrm{GW}(8;3,3)\le 12\) & \(\mathrm{GW}(8;3,3)=8\) & Improved\\
			\(9\) & Local argument gives \(\mathrm{GW}(9;3,3)\le 12\) & \(\mathrm{GW}(9;3,3)=9\) & Improved\\
			\(10\) & Local argument gives \(\mathrm{GW}(10;3,3)\le 12\) & \(\mathrm{GW}(10;3,3)=10\) & Improved\\
			\(11\) & Local argument gives \(\mathrm{GW}(11;3,3)\le 12\) & \(\mathrm{GW}(11;3,3)=11\) & Improved\\
			\(r\ge 12\) & MORW Theorem 4.3: \(\mathrm{GW}(r;3,3)=r\) & \(\mathrm{GW}(r;3,3)=r\) & Same\\
			\bottomrule
		\end{tabularx}
	\end{table}
	
	We emphasize again here: MORW's local forcing argument \cite{MORW} actually proves that ``any coloring without a bad \(3\)-AP has length at most \(11\),''
	so for every \(r\), \(\mathrm{GW}(r;3,3)\le \max\{r,12\}\).
	However, MORW only states \(\mathrm{GW}(r;3,3)=r\) for \(r\ge 12\), and does not determine the exact values for \(r<12\).
	We reduce the upper bound from \(12\) to \(r\) (for \(r\ge 9\)), and further to \(r\) (for \(r\ge 5\)) by combining with computer verification for small \(r\),
	thereby completely determining the values for \(r=4,5,\dots,11\).
	
	\subsection{Differences and Complementarity of the Methods}
	
	The differences between the two proof methods can be summarized as follows.
	
	\begin{enumerate}[label=(\arabic*)]
		\item \textbf{Local vs.\ global.}
		MORW \cite{MORW} starts from a pair of adjacent equal-colored elements in \([3,7]\) and successively forces up to \(a+6\), which is a local forcing argument.
		We first establish the global fact that every coloring of \([1,9]\) using at most \(9\) colors contains a bad \(3\)-AP, and then extend it by the restriction method.
		
		\item \textbf{Source of the threshold.}
		MORW's threshold \(12\) comes from the largest position \(a+6\le 12\) in the local expansion.
		Our threshold \(9\) comes from the length of the anchor interval \([1,9]\); after combining with small-\(r\) verification, the threshold is further reduced to \(5\).
		
		\item \textbf{Purely theoretical vs.\ computer-assisted.}
		MORW's proof \cite{MORW} is purely theoretical and does not rely on computers.
		Our proof relies on finite exhaustive search (Lemma~\ref{lem:machine}), used to verify the exact-coloring instances
		\((9,4),(9,5),(9,6),(9,7),(9,8)\), which feed into the cases \(s=4,5,6,7,8\) of the anchor lemma,
		and to verify \(\mathrm{GW}(5;3,3)\) through \(\mathrm{GW}(8;3,3)\).
		Therefore, the complete proof in this paper is computer-assisted; this is the main methodological difference between the two proofs.
		
		\item \textbf{Coverage and improvement.}
		Our conclusions cover MORW Theorem 4.3 \cite{MORW} (agreeing for \(r\ge 12\)) and give more precise values for \(r=4,5,\dots,11\).
		Therefore, this paper is not a simple repetition of MORW's result, but a strict strengthening of its scope.
	\end{enumerate}
	
	Finally, we emphasize that our anchor lemma relies on small instances verified by computer, whereas MORW's local forcing argument \cite{MORW} is purely theoretical.
	Each method has its advantages: MORW's method gives a uniform threshold \(12\) requiring no computation;
	our method reduces the threshold to \(9\) (for \(r\ge 9\)) and \(5\) (for \(r\ge 5\)) through finite verification,
	and completely determines the exact values for small \(r\). The two are complementary.
	
	\section{Conclusion and Outlook}
	
	This paper completely determines \(\mathrm{GW}(r;3,3)\) for all \(r\ge 3\):
	\[
	\mathrm{GW}(r;3,3)=
	\begin{cases}
		9, & r=3,4,\\
		r, & r\ge 5.
	\end{cases}
	\]
	The core of the proof is a global classification on the anchor interval \([1,9]\), combined with the restriction method to extend to all longer intervals.
	Computer verification is used only for finitely many small-scale instances, while the case \(r\ge 9\) is purely theoretical.
	
	Possible future directions include:
	\begin{enumerate}[label=(\arabic*)]
		\item Extend the anchor method to general \(\mathrm{GW}(r;k,\ell)\), and investigate whether a similar ``just sufficient'' anchor interval exists;
		
		As an encouraging preliminary result for this direction, our ongoing computational investigation of the Gallai--van der Waerden number \(\mathrm{GW}(4;4,4)\) (which requires avoiding rainbow \(4\)-APs and monochromatic \(4\)-APs) has yielded a concrete lower bound of \(\mathrm{GW}(4;4,4) \ge 186\). This was achieved by finding an exact \(4\)-coloring of \([1,185]\) that contains neither a rainbow \(4\)-AP nor a monochromatic \(4\)-AP. The counterexample exhibits a highly structured ``3+1'' pattern: three of the colors are distributed relatively evenly, while the fourth color appears exactly once. The specific coloring is given by the sequence:
		\[
		\begin{aligned}
			&(0, 1, 0, 3, 0, 3, 1, 0, 1, 1, 0, 1, 3, 3, 3, 1, 3, 1, 1, 0, 1, 3, 3, 0, 3, 3, 0, 0, 1, 0, 0, 1, 1, 1, 0, 1, 0, 0, 3, 0, \\
			&1, 1, 3, 1, 3, 1, 0, 0, 3, 3, 0, 0, 0, 3, 0, 3, 3, 1, 3, 0, 0, 1, 0, 1, 3, 3, 3, 1, 1, 3, 3, 3, 1, 3, 1, 1, 0, 1, 3, 3, \\
			&0, 3, 0, 0, 1, 1, 0, 0, 1, 1, 1, 0, 1, 0, 0, 3, 0, 1, 1, 3, 1, 3, 3, 0, 0, 3, 3, 0, 0, 0, 3, 0, 3, 3, 1, 3, 0, 0, 1, 0, \\
			&1, 0, 3, 3, 1, 1, 3, 3, 3, 1, 3, 1, 1, 0, 1, 3, 3, 0, 3, 0, 0, 1, 1, 0, 0, 1, 1, 1, 0, 1, 0, 3, 3, 0, 1, 1, 3, 1, 3, 3, \\
			&0, 3, 1, 3, 0, 0, 0, 3, 0, 3, 3, 1, 1, 3, 0, 1, 0, 0, 2, 1, 1, 1, 0, 0, 1)
		\end{aligned}
		\]
		The color counts are \([63, 62, 1, 59]\), confirming the ``3+1'' structure. This suggests that in higher-dimensional AP problems, focusing the search on colorings where one color is extremely rare can significantly accelerate the discovery of large counterexamples. A verification program for this lower bound is provided in Appendix~\ref{app:gw44_verify}.
		
		\item Use SAT solvers or branch-and-bound methods to break through larger computer-verification scales, for example, to verify high-color cases on \([1,10]\) or \([1,11]\);
		\item Study the values of the non-exact version \(\mathrm{GW}'(r;k,\ell)\) as defined in MORW \cite[Definition 1.17]{MORW}, under the conclusions of this paper; in particular, the monotonicity relation \(\mathrm{GW}'(r;k,\ell)\le\max_i\mathrm{GW}(i;k,\ell)\) stated in \cite[Remark 1.18]{MORW} may yield further corollaries in our framework;
		\item Extend the analogy of ``exactness-driven non-monotonicity'' to graph anti-Ramsey numbers or anti-van der Waerden numbers \cite{Jungic}.
	\end{enumerate}

	\appendix
	
	\section{Algorithm Pseudocode}\label{app:algorithm}
	
	In this appendix we first present the algorithm at a pseudocode level,
	so that the reader does not need to read the Python implementation in order to understand the method.
	The Python implementation is given in Appendix~\ref{app:code}.
	
	The algorithm below exhaustively checks whether there exists an exact \(R\)-coloring of \([1,N]\)
	that avoids both rainbow and monochromatic \(3\)-APs.
	It uses the color symmetry of the problem: without loss of generality, the first position is fixed to color \(0\).
	Indeed, if an exact counterexample exists, applying a permutation of colors that maps the color of its first position to \(0\)
	yields another exact counterexample whose first position is \(0\).
	Thus no counterexample is lost, while the search space is reduced by a factor of \(R\).
	
	\begin{algorithm}[htbp]
		\caption{Exhaustive search for an exact \(R\)-coloring of \([1,N]\) avoiding bad \(3\)-APs}
		\label{alg:verify}
		\begin{algorithmic}[1]
			\Require integers \(N\ge 1\) and \(R\ge 1\)
			\Ensure an exact counterexample (a tuple of length \(N\)) if one exists; otherwise \texttt{None}
			\If{\(R>N\)}
			\State \Return \texttt{None} \Comment{no exact \(R\)-coloring exists}
			\EndIf
			\State Compute the list \(\mathcal{A}\) of all \(3\)-APs in \([1,N]\)
			\Statex \hspace{1em} (each \(3\)-AP is represented as a triple \((i,j,k)\) of positions)
			\For{each coloring \(\chi\) of positions \(2,\dots,N\) using colors \(\{0,\dots,R-1\}\)}
			\State set \(\chi(1)\gets 0\) \Comment{color symmetry}
			\If{\(\chi\) uses all \(R\) colors} \Comment{\(\chi\) is exact}
			\State \(bad\gets \text{false}\)
			\For{each \((i,j,k)\in\mathcal{A}\)}
			\If{\(\chi(i)=\chi(j)=\chi(k)\)}
			\State \(bad\gets \text{true}\)
			\State \textbf{break}
			\EndIf
			\If{\(\chi(i),\chi(j),\chi(k)\) are pairwise distinct}
			\State \(bad\gets \text{true}\)
			\State \textbf{break}
			\EndIf
			\EndFor
			\If{not \(bad\)}
			\State \Return \(\chi\) \Comment{exact counterexample found}
			\EndIf
			\EndIf
			\EndFor
			\State \Return \texttt{None} \Comment{no exact counterexample exists}
		\end{algorithmic}
	\end{algorithm}
	
	Algorithm~\ref{alg:verify} is a direct finite exhaustive search:
	the outer loop enumerates all \(R^{N-1}\) assignments of colors to positions \(2,\dots,N\),
	and the inner loop checks all \(3\)-APs.
	For the instances in Table~\ref{tab:instances}, the total number of colorings is small enough
	that the search terminates quickly.
	The Python implementation in Appendix~\ref{app:code} follows Algorithm~\ref{alg:verify}
	and additionally uses block-wise vectorization with \texttt{numpy} for speed.
	
	\section{Python Implementation for \(3\)-APs}\label{app:code}
	
	The following program is the complete Python code used for computer verification in this paper.
	It depends on \texttt{numpy}. For each \((N,R)\), the program exhaustively enumerates all colorings of \([1,N]\)
	and checks whether there exists an exact \(R\)-coloring for which every \(3\)-AP is neither monochromatic nor rainbow.
	The function \texttt{verify(N,R)} returns the first exact counterexample if one exists, and \texttt{None} otherwise.
	For \((8,4)\), the program should output the counterexample \texttt{(0, 1, 1, 2, 1, 2, 2, 3)},
	which also validates its detection capability.
	
	\begin{remark}[Optimization by color symmetry]
		The program fixes the first position to color \(0\). This is valid because the property of ``being an exact \(R\)-coloring with no bad \(3\)-AP'' is invariant under permutations of the color names.
		Indeed, if there exists any exact counterexample, applying a permutation that maps its first position's color to \(0\) yields another exact counterexample whose first position is \(0\).
		Hence searching over colorings with first position \(0\) loses no counterexamples, while reducing the search space by a factor of \(R\).
	\end{remark}
	
	\begin{lstlisting}[language=Python, caption={Deterministic exhaustive verification program (with first-position color-symmetry reduction)}]
		# -*- coding: utf-8 -*-
		"""
		Deterministic exhaustive verification:
		Does there exist an exact R-coloring of [1, N] avoiding both rainbow and monochromatic 3-APs?
		
		By color symmetry, the first position is fixed to color 0; this reduces
		the search space by a factor of R and loses no counterexamples.
		
		verify(N, R) returns the first exact counterexample (a tuple), or None.
		"""
		import sys
		import numpy as np
		from itertools import product, islice
		
		BLOCK = 200_000  # number of colorings per block, controls memory
		
		
		def make_aps(N):
		"""Generate all 3-AP position triples (0-based indices) in [1, N]."""
		return [
		(i, i + d, i + 2 * d)
		for i in range(N)
		for d in range(1, N)
		if i + 2 * d < N
		]
		
		
		def check_chunk(chunk, ap_arr, R):
		"""
		Check a block of colorings.
		Return the first exact R-coloring for which every 3-AP is neither
		monochromatic nor rainbow (as a tuple), or None if no such coloring exists.
		"""
		seg = chunk[:, ap_arr]                  # (B, L, 3)
		a1, a2, a3 = seg.transpose(2, 0, 1)     # three color components, each (B, L)
		
		mono = (a1 == a2) & (a2 == a3)
		rain = (a1 != a2) & (a2 != a3) & (a1 != a3)
		bad = mono | rain                       # (B, L)
		
		# rows for which every 3-AP is neither monochromatic nor rainbow
		good = np.flatnonzero(~bad.any(axis=1))
		for idx in good:
		cs = tuple(int(x) for x in chunk[idx])
		if len(set(cs)) == R:               # exact R-coloring
		return cs
		return None
		
		
		def verify(N, R):
		"""
		Exhaustively enumerate colorings of [1, N] with the first position fixed
		to color 0. If an exact counterexample exists, return the first one;
		otherwise return None.
		"""
		if R > N:
		return None
		
		aps = make_aps(N)
		ap_arr = np.array(aps, dtype=np.intp)
		
		# Fix the first position to color 0 (color symmetry).
		it = product(range(R), repeat=N - 1)
		
		while True:
		rest_block = list(islice(it, BLOCK))
		if not rest_block:
		break
		block = [(0,) + rest for rest in rest_block]
		chunk = np.array(block, dtype=np.uint8)
		
		cs = check_chunk(chunk, ap_arr, R)
		if cs is not None:
		return cs
		
		return None
		
		
		def main():
		tests = [
		(8, 4), (9, 4),
		(6, 5), (7, 5), (8, 5), (9, 5),
		(7, 6), (8, 6), (9, 6),
		(8, 7), (9, 7),
		(9, 8),
		]
		
		for N, R in tests:
		cs = verify(N, R)
		if cs is None:
		print(f"({N},{R}): no exact counterexample found")
		else:
		print(f"({N},{R}): exact counterexample {cs}")
		
		
		if __name__ == "__main__":
		sys.exit(main())
	\end{lstlisting}
	
	\section{Verification Program for the \(\mathrm{GW}(4;4,4)\) Lower Bound}\label{app:gw44_verify}
	
	The following Python script independently verifies the counterexample for \(\mathrm{GW}(4;4,4) > 185\) (thus \(\ge 186\)). It exhaustively checks all \(4\)-APs in the given exact \(4\)-coloring of length \(185\) to ensure none are monochromatic or rainbow.
	
	\begin{lstlisting}[language=Python, caption={Independent verification of the \(\mathrm{GW}(4;4,4) \ge 186\) counterexample}]
		# -*- coding: utf-8 -*-
		"""
		Verification of the counterexample for GW(4;4,4) > 185.
		This script checks all 4-APs in the given sequence of length 185.
		"""
		
		coloring = (0, 1, 0, 3, 0, 3, 1, 0, 1, 1, 0, 1, 3, 3, 3, 1, 3, 1, 1, 0, 1, 3, 3, 0, 3, 3, 0, 0, 1, 0, 0, 1, 1, 1, 0, 1, 0, 0, 3, 0, 1, 1, 3, 1, 3, 1, 0, 0, 3, 3, 0, 0, 0, 3, 0, 3, 3, 1, 3, 0, 0, 1, 0, 1, 3, 3, 3, 1, 1, 3, 3, 3, 1, 3, 1, 1, 0, 1, 3, 3, 0, 3, 0, 0, 1, 1, 0, 0, 1, 1, 1, 0, 1, 0, 0, 3, 0, 1, 1, 3, 1, 3, 3, 0, 0, 3, 3, 0, 0, 0, 3, 0, 3, 3, 1, 3, 0, 0, 1, 0, 1, 0, 3, 3, 1, 1, 3, 3, 3, 1, 3, 1, 1, 0, 1, 3, 3, 0, 3, 0, 0, 1, 1, 0, 0, 1, 1, 1, 0, 1, 0, 3, 3, 0, 1, 1, 3, 1, 3, 3, 0, 3, 1, 3, 0, 0, 0, 3, 0, 3, 3, 1, 1, 3, 0, 1, 0, 0, 2, 1, 1, 1, 0, 0, 1)
		
		N = len(coloring)
		print(f"Length: {N}")
		print(f"Color set: {sorted(set(coloring))}")
		print(f"Exact 4-coloring: {sorted(set(coloring)) == [0, 1, 2, 3]}")
		print(f"Color counts: {[coloring.count(c) for c in range(4)]}")
		
		bad = []
		total = 0
		for a in range(N):
		for d in range(1, N):
		if a + 3 * d >= N:
		break
		total += 1
		c = (coloring[a], coloring[a + d], coloring[a + 2 * d], coloring[a + 3 * d])
		# Monochromatic: all four equal. Rainbow: all four distinct.
		if len(set(c)) == 1 or len(set(c)) == 4:
		bad.append((a + 1, a + d + 1, a + 2 * d + 1, a + 3 * d + 1, c))
		
		print(f"Total 4-APs: {total}")
		print(f"Number of bad 4-APs: {len(bad)}")
		if bad:
		print("❌ Found bad 4-APs (first 5):")
		for x in bad[:5]:
		print(x)
		else:
		print("✅ Verification passed: No rainbow/monochromatic 4-APs")
		print(f"\nConclusion: GW(4;4,4) > {N}, i.e., GW(4;4,4) >= {N + 1}")
	\end{lstlisting}
	
\end{document}